\newtheorem{proposition}{Proposition}[section]
\newtheorem{theorem}[proposition]{Theorem}
\newtheorem{lemma}[proposition]{Lemma}
\newtheorem{corollary}[proposition]{Corollary}
\newtheorem{definition}[proposition]{Definition}
\newtheorem{remark}[proposition]{Remark}
\newenvironment{proof}{\smallskip\noindent\emph{\textbf{Proof.}}\hspace{1pt}}%
{\hspace{-5pt}{\nobreak\quad\nobreak\hfill\nobreak$\square$\vspace{8pt}%
\par}\smallskip\goodbreak}
\newenvironment{proofof}[1]{\smallskip\noindent\emph{\textbf{Proof of #1.}}%
\hspace{1pt}}{\hspace{-5pt}{\nobreak\quad\nobreak\hfill\nobreak%
$\square$\vspace{8pt}\par}\smallskip\goodbreak}
\newcommand{\Section}[1]{\section{#1}\setcounter{equation}{0}}
\newcommand{\Id}{\mathinner{\mathrm{Id}}}
\newcommand{\comp}{\mathop\bigcirc}
\newcommand{\C}[1]{\mathbf{C^{#1}}}
\newcommand{\CB}[1]{\mathbf{C_B^{#1}}}
\newcommand{\modulo}[1]{{\left|#1\right|}}
\newcommand{\norma}[1]{{\left\|#1\right\|}}
\newcommand{\Ref}[1]{{\rm(\ref{#1})}}
\newcommand{\reali}{{\mathbb{R}}}
\newcommand{\interi}{{\mathbb{Z}}}
\newcommand{\naturali}{{\mathbb{N}}}
\newcommand{\Lip}{\mathinner\mathbf{Lip}}
\renewcommand{\epsilon}{\varepsilon}
\renewcommand{\phi}{\varphi}
\renewcommand{\theta}{\vartheta}
\renewcommand{\L}[1]{\mathbf{L^#1}}
\title{Differential Equations in Metric Spaces \\ with Applications}
\author{Rinaldo M.~Colombo \\ \small Dipartimento di Matematica \\
  \small Universit\`a degli Studi di Brescia \\ \small Via Branze, 38
  \\ \small 25123 Brescia, Italy \\ \texttt{Rinaldo.Colombo@UniBs.it} \\
  \and Graziano Guerra \\ \small Dip.~di Matematica e Applicazioni \\
  \small Universit\`a di Milano -- Bicocca \\ \small Via Bicocca degli
  Arcimboldi, 8 \\ \small 20126 Milano, Italy \\
  \texttt{Graziano.Guerra@UniMiB.it}}
\begin{document}

\maketitle

\begin{abstract}

  \noindent This paper proves the local well posedness of differential
  equations in metric spaces under assumptions that allow to comprise
  several different applications. We consider below a system of
  balance laws with a dissipative non local source, the Hille-Yosida
  Theorem, a generalization of a recent result on nonlinear operator
  splitting, an extension of Trotter formula for linear semigroups and
  the heat equation.

  \medskip

  \noindent\textit{2000~Mathematics Subject Classification:} 34G20,
  47J35, 35L65.

  \medskip

  \noindent\textit{Key words and phrases:} Nonlinear differential equations in
  metric spaces; Balance laws; Stop problem; Operator Splitting.

\end{abstract}

\Section{Introduction}
\label{sec:Intro}

This paper is concerned with differential equations in a metric space
and their applications to ordinary and partial differential equations.
More precisely, we modify the structure of quasidifferential equations
introduced in~\cite{Panasyuk, Panasyuk97}, see
also~\cite{BressanLarnas}, and prove existence, uniqueness and
continuous dependence under conditions weaker than those therein. A
general estimate on the difference between any Euler polygonal
approximation and the exact solution is also provided.

Following the model theory of ordinary differential equations, we
provide a result that comprises also other entirely different
examples: the heat equation, the Hille-Yosida theorem, the
Lie--Trotter product formula and a balance law with a dissipative non
local source. In particular, the latter result is an extension
of~\cite{ColomboGuerra} that was announced in~\cite{GuerraColombo}.
Furthermore, the construction below weakens the assumptions introduced
in~\cite{ColomboCorli5} on the non linear operator splitting technique
in metric spaces, as well as those introduced in~\cite{KuhneWacker1}
on linear Lie-Trotter products. Remark that the commutativity
condition adopted here, namely~\Ref{eq:k}, is optimal, in the sense of
Paragraph~\ref{sub:example}.

Following~\cite{BressanLarnas, Panasyuk, Panasyuk97}, for any $u$ in
the metric space $X$, the tangent space $T_u X$ to $X$ at $u$ is the
quotient of the set $\left\{ \gamma \in \C{0,1} \left( [0,1]; X
  \right) \colon \gamma(0) = u \right\}$ of continuous curves exiting
$u$ modulo the equivalence relation of first order contact,
i.e.~$\gamma_1 \sim \gamma_2$ if and only if $\lim_{t \to 0}
\frac{1}{\tau} d \left( \gamma_1(\tau),\gamma_2(\tau) \right) = 0$.
Any map $v \colon [0,T] \times X \mapsto \bigcup_{u\in X} T_u X$, such
that $v(t,u) \in T_u X$ for all $t$ and $u$ then defines both a vector
field on $X$ and the (quasi)differential equation
\begin{equation}
  \label{eq:QDE}
  \dot u = v(t,u) \,.
\end{equation}
Besides suitable regularity assumptions, a solution to~\Ref{eq:QDE} is
a process $(t,t_o,u) \mapsto P(t,t_o)u$ such that $P(0,t_o) u_o = u_o$
and the curve $\tau \to P(t+\tau,t_o) u_o$ belongs to the equivalence
class of $v\left( t_o+t, P(t,t_o)u \right)$, for all $t \geq 0$. A
\emph{local flow} generated by $v$ is a map $(t,t_o,u) \mapsto
F(t,t_o)u$ such that $F(0,t_o) u_o = u_o$ and the curve $\tau \to
F(\tau,t_o) u_o$ belongs to the equivalence class of $v\left( t_o, u_o
\right)$.

Within this framework, we prove that if the vector field $v$ can be
defined through a suitable \emph{local flow}, then the Cauchy
problem~\Ref{eq:QDE} is well posed globally in $u_o \in X$ and locally
in $t \in \reali$. The proof is constructive and based on Euler
polygonal, similarly to~\cite{Panasyuk, Panasyuk97}, but with somewhat
looser assumptions.

This approach to differential equations, being sited in metric spaces,
does not rely on any linearity assumption whatsoever. It is therefore
particularly suited to describe truly nonlinear or even discontinuous
models. In this connection, we refer to the example in
Paragraph~\ref{subsec:stop} below and to~\cite[Theorem~3.1]{Panasyuk},
which is contained in the present framework.

The next section is the core of this paper: it presents the definition
of \emph{local flow} and Theorem~\ref{thm:main}, which shows that a
local flow generates a global process. We also give an example showing
the necessity of the assumptions in Theorem~\ref{thm:main}.
Section~\ref{sec:Applications} presents several applications. The
first one deals with a recent new result about balance laws with a
diagonally dominant non local source.  The final
Section~\ref{sec:Tech} provides the various technical details.

\Section{Notation and Main Result}
\label{sec:Result}

Throughout, $(X,d)$ denotes a complete metric space. In view of the
applications in Section~\ref{sec:Applications}, we need to slightly
extend the basic definitions about differential equations in metric
spaces in~\cite{BressanLarnas}, see also~\cite{Aubin, Panasyuk,
  Panasyuk97}.

Throughout this paper, we fix $T \in \left[0, +\infty\right[$ and the
interval $I = [0,T]$.

\begin{definition}
  \label{def:local}
  Given a closed set $\mathcal{D} \subseteq X$, a \emph{local flow} is
  a continuous map $F \colon [0,\delta] \times I \times \mathcal{D}
  \mapsto X$, such that $F\left(0,t_o\right)u=u$ for any $(t_o,u)\in
  I\times \mathcal{D}$ and which is Lipschitz in its first and third
  argument uniformly in the second, i.e.~there exists a 
  $\Lip(F) > 0$ such that for all $\tau,\tau' \in [0,\delta]$ and
  $u,u' \in \mathcal{D}$
  \begin{equation}
    d \left( F(\tau,t_o)u, F(\tau',t_o) u' \right)
    \leq
    \Lip(F) \cdot \left( d(u,u') + \modulo{\tau - \tau'}\right) \,.
  \end{equation}
\end{definition}

\noindent To explain the notation, consider the case of a Banach space
$X$ and let $\mathcal{D} \subseteq X$. If $v \colon I \times
\mathcal{D} \mapsto X$ is a vector field defining the ordinary
differential equation $\dot u = v(t,u)$, then a local flow (generated
by $v$) is the map $F$ where $F(\tau,t_o) u = u + \tau \, v(t_o,u)$.

\begin{definition}
  Let $F$ be a local flow. Fix $u \in \mathcal{D}$, $t_o \in I$, $t >
  0$ with $t_o+t \in I$, for any $k \in \naturali $ and $\tau_0 = 0$,
  $\tau_1, \ldots, \tau_k$, $\tau_{k+1} = t$ such that
  $\tau_{h+1}-\tau_h \in \left]0,\delta \right]$ for $h=0, \ldots, k$,
  we call Euler polygonal the map
  \begin{equation}
    \label{eq:Euler}
    F^E (t,t_o) \, u 
    = 
    \comp_{h=0}^{k} F(\tau_{h+1} -\tau_h, t_o + \tau_h) \, u
  \end{equation}
  whenever it is defined.  For any $\epsilon > 0$, let $k = [t /
  \epsilon]$. An Euler $\epsilon$-polygonal is
  \begin{equation}
    \label{eq:polygonal}
    F^\epsilon (t,t_o) \, u
    =
    F(t-k\epsilon, t_o+k\epsilon) \circ 
    \comp_{h=0}^{k-1} F(\epsilon, t_o+h\epsilon) \, u
  \end{equation}
  whenever it is defined.
\end{definition}

\noindent Above and in what follows, $[\,\cdot\,]$ denotes the integer
part, i.e.~for $s \in \reali$, $[s] = \max \{k \in \interi \colon k
\leq s \}$. Clearly, $F^E$ in~\Ref{eq:Euler} reduces to $F^\epsilon$
in~\Ref{eq:polygonal} as soon as $\tau_h = t_o + h \epsilon$ for $h=0,
\ldots, k=[t/\epsilon]$.  The Lipschitz dependence on time of the
local flow implies the same regularity of the Euler
$\epsilon$-polygonals, as shown by the next lemma, proved in
Section~\ref{sec:Tech}.

\begin{lemma}
  \label{lem:Time}
  Let $F$ be a local flow. Then $F^\epsilon$, whenever defined, is
  Lipschitz in $t,u$ and continuous in $t_o$. Moreover,
  \begin{displaymath}
    d \left( F^\epsilon(t,t_o)u, F^\epsilon(s,t_o)u\right) 
    \leq 
    \Lip(F) \, \modulo{t-s} \,.
  \end{displaymath}
\end{lemma}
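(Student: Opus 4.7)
The three assertions decouple cleanly: Lipschitz in $u$ is immediate from the composition structure, continuity in $t_o$ follows by induction from joint continuity of $F$, and the sharp estimate in $t$ is the only part that needs some care because it asserts the constant $\Lip(F)$ rather than something like $\Lip(F)^{\lceil t/\epsilon\rceil}$.

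For the Lipschitz bound in $u$, write $F^\epsilon(t,t_o)u$ as a composition of at most $[t/\epsilon]+1$ maps, each of which is Lipschitz in its third argument with constant $\Lip(F)$ by Definition~\ref{def:local}. The composition is therefore Lipschitz in $u$ with constant $\Lip(F)^{[t/\epsilon]+1}$, which suffices for the qualitative statement.

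For the sharp Lipschitz bound in $t$, fix $u\in\mathcal{D}$, $t_o\in I$, and $0\le s<t$ with $F^\epsilon(t,t_o)u$ and $F^\epsilon(s,t_o)u$ both defined. First I would check the ``boundary consistency'' that at $t=j\epsilon$ the two candidate representations of $F^\epsilon$ agree, since $F(0,\cdot)=\Id$; this means the grid points $j\epsilon$ partition $[s,t]$ into sub-intervals on each of which $k=[\,\cdot\,/\epsilon]$ is constant. On such a sub-interval $[a,b]$, say with $k$ constant equal to $k_0$, setting $w=\comp_{h=0}^{k_0-1} F(\epsilon,t_o+h\epsilon)u$ gives
\begin{displaymath}
  d\!\left(F^\epsilon(b,t_o)u,\,F^\epsilon(a,t_o)u\right)
  = d\!\left(F(b-k_0\epsilon,t_o+k_0\epsilon)w,\;F(a-k_0\epsilon,t_o+k_0\epsilon)w\right)
  \le \Lip(F)\,(b-a),
\end{displaymath}
the inequality being Definition~\ref{def:local} applied to the \emph{same} base point $w$. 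Summing the triangle inequality over the finitely many pieces of the partition produces the claimed bound $\Lip(F)\,|t-s|$, with the telescoping of lengths eliminating any dependence on the number of steps.

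For continuity in $t_o$, I would proceed by induction on $h=0,\dots,[t/\epsilon]$ on the intermediate points $w_h(t_o)=\comp_{j=0}^{h-1}F(\epsilon,t_o+j\epsilon)u$. The step $w_h\mapsto w_{h+1}$ uses the splitting
\begin{displaymath}
  d\!\left(F(\epsilon,t_o^n+h\epsilon)w_h(t_o^n),\,F(\epsilon,t_o+h\epsilon)w_h(t_o)\right)
  \le \Lip(F)\,d(w_h(t_o^n),w_h(t_o)) + d\!\left(F(\epsilon,t_o^n+h\epsilon)w_h(t_o),\,F(\epsilon,t_o+h\epsilon)w_h(t_o)\right),
\end{displaymath}
where the first summand vanishes by the inductive hypothesis and the second by the joint continuity of $F$ assumed in Definition~\ref{def:local}. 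A final application of the same argument to the trailing factor $F(t-k\epsilon,t_o+k\epsilon)$ yields continuity of $F^\epsilon(t,t_o)u$ in $t_o$.

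The main obstacle is really the bookkeeping in the time estimate: one must notice that the partitioning at the grid points $j\epsilon$ is exactly what collapses the would-be exponential-in-steps constant into the single factor $\Lip(F)$, since only the \emph{last} partial flow changes as $t$ varies inside a single bin. The $t_o$-continuity argument is slightly delicate too because we have no quantitative modulus for $F$ in its second variable, so one is forced to pass through a countable-limit/inductive argument rather than a direct estimate.
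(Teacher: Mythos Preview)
Your proof is correct and follows essentially the same approach as the paper: the paper likewise dispatches Lipschitzeanity in $u$ and continuity in $t_o$ by ``composition of functions with the same properties,'' and for the time estimate it too partitions $[s,t]$ at the grid points $j\epsilon$, bounds each piece by $\Lip(F)$ times its length using that only the final factor varies within a bin, and telescopes. Your explicit remark on boundary consistency at $t=j\epsilon$ via $F(0,\cdot)=\Id$ and your inductive treatment of the $t_o$-continuity are just more careful spellings-out of what the paper leaves implicit.
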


On the other hand, an elementary computation shows that
\begin{displaymath}
  d \left( F^\epsilon (t,t_o)u, F^\epsilon (t,t_o)w \right)
  \leq
  \left(\Lip(F)\right)^{[t/\epsilon]+1} \, d(u,w)
\end{displaymath}
so that the limit $\epsilon \to 0$ is possible only when $\Lip(F) \leq
1$.  In the case of conservation laws, for instance, the usual
procedure to prove the well posedness is based on this key point: the
Lipschitz dependence of $F^\epsilon$ on $u$ is achieved through the
introduction of a functional
(see~\cite[Chapter~8]{BressanLectureNotes} and~\cite{ColomboGuerra2})
or a metric (see~\cite{BianchiniTemple, BressanColombo}) equivalent to
the $\L1$ distance $d$ and according to which the Euler polygonals
turn out to be non expansive. Therefore, in the theorem below, the
Lipschitzeanity of $F^\epsilon$ on $u$ is explicitly required.

With the same notation of Definition~\ref{def:local}, we introduce
what provides a solution to the Cauchy problem~\Ref{eq:QDE}.

\begin{definition}
  \label{def:Local}
  Consider a family of sets $\mathcal{D}_{t_o}\subseteq\mathcal{D}$
  for all $t_o\in I$, and a set
  \begin{displaymath}
    \mathcal{A} 
    =
    \left\{ 
      (t,t_o,u) \colon t \geq 0,\  t_o,t_o+t \in I 
      \mbox{ and }u \in \mathcal{D}_{t_o}
    \right\}
  \end{displaymath}
  A \emph{global process} on $X$ is a map $P \colon \mathcal{A}
  \mapsto X$ such that, for all $t_o,t_1,t_2,u$ satisfying $t_1,t_2\ge
  0$, $t_o,t_o+t_1+t_2\in I$ and $u\in\mathcal{D}_{t_o}$, satisfies
  \begin{eqnarray}
    \nonumber
    & & 
    P(0,t_o) u = u
    \\
    \label{eq:process0}
    & & 
    P(t_1,t_o)u\in\mathcal{D}_{t_o+t_1}
    \\
    \label{eq:process}
    & &
    P ( t_2, t_o + t_1) \circ P(t_1,t_o)u
    =
    P(t_2+t_1,t_o)u
  \end{eqnarray}
\end{definition}

To state the main theorem, we consider the following sets for any $t_o
\in I$
\begin{equation}
  \label{def:D3}
  \!\!\!
  \mathcal{D}^3_{t_o}
  =
  \left\{
    u\in\mathcal{D} \colon \!
    \begin{array}{l}
      F^{\epsilon_3} (t_3,t_o+t_1+t_2) \circ 
      F^{\epsilon_2} (t_2,t_o+t_1) \circ 
      F^{\epsilon_1} (t_1,t_o)u
      \\
      \mbox{is in } \mathcal{D}
      \mbox{ for all }
      \epsilon_1, \epsilon_2, \epsilon_3 \in
      \left]0,\delta \right]
      \mbox{ and all} 
      \\ t_1, t_2, t_3\geq 0 \mbox{ such that } t_o+t_1+t_2+t_3\in I
    \end{array}\!
  \right\}
\end{equation}

\begin{theorem}
  \label{thm:main}
  The local flow $F$ is such that there exists
  \begin{enumerate}
  \item \label{it:first} a non decreasing map $\omega \colon
    [0,\delta] \mapsto \reali^+$ with $\int_0^{\delta}
    \frac{\omega(\tau)}{\tau} \, d\tau < +\infty$ such that
    \begin{equation}
      \label{eq:k}
      d \left(
        F ( k \tau, t_o+\tau) \circ F(\tau,t_o) u, 
        F \left( (k+1) \tau, t_o \right)u
      \right)
      \leq
      k\tau \, \omega(\tau)
    \end{equation}
    whenever $\tau \in [0,\delta]$, $k\in \naturali$ and the left hand
    side above is well defined;
  \item \label{it:second} a positive constant $L$ such that
    \begin{equation}
      \label{eq:Stability}
      d \left( F^\epsilon (t,t_o) u, F^\epsilon (t,t_o) w \right)
      \leq L \cdot d(u,w)
    \end{equation}
    whenever $\epsilon \in \left]0, \delta\right]$, $u, w \in
    \mathcal{D}$, $t\geq 0$, $t_o,t_o+t\in I$ and the left hand side
    above is well defined.
  \end{enumerate}
  \noindent Then, there exists a family of sets $\mathcal{D}_{t_o}$
  and a unique a global process (as defined in
  Definition~\ref{def:local}) $P \colon \mathcal{A} \mapsto X$ with
  the following properties:
  \begin{enumerate}[a)]
  \item $\mathcal{D}^3_{t_o}\subseteq\mathcal{D}_{t_o}$ for any
    $t_o\in I$;
  \item $P$ is Lipschitz in $(t,t_o,u)$;
  \item
    \label{fgsfssald3}
    $P$ is tangent to $F$ in the sense that for all $u \in
    \mathcal{D}_{t_o}$, for all $t$ such that $t \in \left]0,
      \delta\right]$ and $t_o +t \in I$:
    \begin{equation}
      \label{eq:tangent}
      \frac{1}{t} \,
      d\left( P(t,t_o)u, F(t,t_o)u \right)
      \leq
      \frac{2L}{\ln 2}
      \int_0^t \frac{\omega(\xi)}{\xi} \, d\xi \,.
    \end{equation}
  \end{enumerate}
\end{theorem}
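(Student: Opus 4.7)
The plan is to construct $P(t,t_o)u$ as the limit of Euler $\epsilon$-polygonals $F^\epsilon(t,t_o)u$ as $\epsilon \to 0$, and then verify the required properties. The strategy is: first prove a key Cauchy estimate by comparing polygonals with step sizes $\epsilon$ and $2\epsilon$; then iterate the halving and use the integrability of $\omega(\tau)/\tau$ to pass to the limit.

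Step~1 (single halving). I would compare $F^{2\epsilon}(t,t_o)u$ with $F^{\epsilon}(t,t_o)u$ via a telescoping sum over the $M \approx t/(2\epsilon)$ macro-steps: swap one double-step $F(2\epsilon, t_o{+}2h\epsilon)$ at a time for the pair $F(\epsilon, t_o{+}(2h{+}1)\epsilon)\circ F(\epsilon, t_o{+}2h\epsilon)$. Hypothesis~\Ref{eq:k} with $k=1$ bounds each local swap by $\epsilon\,\omega(\epsilon)$, while~\Ref{eq:Stability} applied to the tail polygonal that follows the swap propagates it with factor $L$. Summing $M$ terms yields
\begin{displaymath}
  d\left(F^{2\epsilon}(t,t_o)u,\; F^{\epsilon}(t,t_o)u\right)
  \;\le\; \frac{L\,t}{2}\,\omega(\epsilon).
\end{displaymath}

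Step~2 (iteration and integral estimate). Iterating the halving gives $d(F^{\epsilon}(t,t_o)u,F^{\epsilon/2^n}(t,t_o)u)\le \frac{Lt}{2}\sum_{k=1}^{n}\omega(\epsilon/2^k)$. Monotonicity of $\omega$ and the change of variable on each dyadic interval bound the sum by $\frac{1}{\ln 2}\int_0^{\epsilon}\omega(\tau)/\tau\,d\tau$, which vanishes as $\epsilon\to 0$ by hypothesis~\ref{it:first}. A refinement of the telescoping comparing general $\epsilon_1,\epsilon_2$ (not just dyadic ones, e.g.\ by passing through a common subdivision) shows that $\{F^\epsilon(t,t_o)u\}_{\epsilon}$ is Cauchy in $\epsilon$, so the pointwise limit $P(t,t_o)u:=\lim_{\epsilon\to 0}F^\epsilon(t,t_o)u$ exists whenever all polygonals involved are well defined---this is exactly what the definition of $\mathcal{D}^3_{t_o}$ in~\Ref{def:D3} secures.

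Step~3 (properties). Lipschitz dependence on $u$ passes to the limit from~\Ref{eq:Stability}; Lipschitz dependence on $t$ and continuity in $t_o$ come from Lemma~\ref{lem:Time}. The tangency estimate~\Ref{eq:tangent} is obtained by choosing $\epsilon=t$ in Steps~1--2, since $F^t(t,t_o)u=F(t,t_o)u$ (the polygonal reduces to a single flow step). For the process identity~\Ref{eq:process}, I would take $\epsilon=(t_1+t_2)/N$ with $N$ large enough that both $t_1$ and the endpoint fall at breakpoints, observe the exact identity $F^\epsilon(t_2,t_o{+}t_1)\circ F^\epsilon(t_1,t_o)=F^\epsilon(t_1{+}t_2,t_o)$ along such $\epsilon$, and pass to the limit using continuity of $P$ in $u$. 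The invariance~\Ref{eq:process0} and the fact that $\mathcal{D}^3_{t_o}\subseteq\mathcal{D}_{t_o}$ then dictate defining $\mathcal{D}_{t_o}$ as the set of $u\in\mathcal{D}$ for which the Cauchy procedure and the semigroup identity make sense. Uniqueness of $P$ follows from a Gronwall-type argument comparing any two processes tangent to $F$ via~\Ref{eq:tangent} and~\Ref{eq:Stability}.

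Main obstacle. The delicate point is Step~1: the stability~\Ref{eq:Stability} is an estimate on the \emph{whole} polygonal, not on individual compositions of $F$, so the telescoping must be organized so that~\Ref{eq:Stability} is invoked on the tail polygonal after each swap, not on the swapped step itself. Handling the fractional remainder $t-[t/\epsilon]\epsilon$ and comparing non-dyadic $\epsilon_1,\epsilon_2$ (needed to obtain a Cauchy net rather than a Cauchy sequence) requires extra bookkeeping, and making sure that every intermediate state of every polygonal remains in $\mathcal{D}$---which is the reason for the triple-polygonal condition encoded in $\mathcal{D}^3_{t_o}$---will be the subtle structural point that justifies invoking~\Ref{eq:Stability} freely throughout the argument.
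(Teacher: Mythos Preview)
Your outline follows the same overall route as the paper: construct $P$ as a limit of Euler polygonals via a dyadic Cauchy estimate obtained from telescoping plus the stability bound~\Ref{eq:Stability}, then derive Lipschitzeanity, tangency (via $\epsilon=t$), and the process identity from the exact relation $F^\epsilon(t_2,t_o+t_1)\circ F^\epsilon(t_1,t_o)=F^\epsilon(t_1+t_2,t_o)$ at commensurable step sizes. This is precisely how the paper organizes the argument (Lemmas~\ref{lem:kappa}--\ref{lem:integral}, Corollary~\ref{cor:Pi}, Proposition~\ref{prop:45}, Lemma~\ref{lemma4.7}).

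There is, however, a genuine gap in your Step~2. Your Step~1 uses~\Ref{eq:k} only with $k=1$, and dyadic iteration of that alone yields convergence of the sequence $F^{t2^{-m}}(t,t_o)u$---but \emph{not} that $F^\epsilon(t,t_o)u$ converges for every $\epsilon\to0$. You then write that ``a refinement of the telescoping\ldots\ e.g.\ by passing through a common subdivision'' upgrades this to a full Cauchy net. That is exactly the step where~\Ref{eq:k} for general $k$ is indispensable: to compare $F^\epsilon$ with $F^{\epsilon/m}$ one must bound $d\bigl(F(\epsilon,\cdot)v,\,[F(\epsilon/m,\cdot)]^m v\bigr)$, and the telescoping for that invokes~\Ref{eq:k} with $k=1,\ldots,m-1$. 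Moreover, for incommensurable $\epsilon_1,\epsilon_2$ there is no common subdivision at all. The paper handles this by first defining $P$ via the dyadic limit (Corollary~\ref{cor:Pi}) and then proving $F^\epsilon\to P$ for arbitrary $\epsilon$ in Proposition~\ref{prop:45}, whose middle term uses Lemma~\ref{lem:hk} with $k=n_\epsilon=[t/\epsilon]$ arbitrary. The paper explicitly flags this in Remark~\ref{rem:basta2} and builds a counterexample in Paragraph~\ref{sub:example} showing that with~\Ref{eq:k} for $k=1$ only, the dyadic limit exists but fails to be a process. So your hand-wave in Step~2 hides the one place where the full strength of hypothesis~\ref{it:first} is actually used; without it, your Step~3 derivation of~\Ref{eq:process} collapses, since it needs convergence along the particular (non-dyadic) sequence $\epsilon_\nu$ adapted to $t_1,t_2$.
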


\noindent The proof is deferred to Section~\ref{sec:Tech}.

Observe that in the general formulation of Theorem~\ref{thm:main}, the
set $\mathcal{A}$ where $P$ is defined could be empty.  However, in
the applications, the following stronger condition, equivalent
to~\cite[Condition~2.]{NietoLopez}, is often satisfied:
\begin{description}
\item[(D)] There is a family $\hat\mathcal{D}_{t_o}$ of subsets of
  $\mathcal{D}$ satisfying $F(t,t_o) \hat\mathcal{D}_{t_o} \subseteq
  \hat\mathcal{D}_{t_o+t}$ for any $t \in [0,\delta]$ and $t_o,t_o+t
  \in I$
\end{description}
\noindent If~\textbf{(D)} holds, then obviously one has
$\hat\mathcal{D}_{t_o} \subseteq \mathcal{D}^3_{t_o} \subseteq
\mathcal{D}_{t_o}$ for any $t_o\in I$, providing a lower bound for the
set where the process is defined. If~\textbf{(D)} does not hold, then
a lower bound on $\mathcal{A}$ needs to be found exploiting specific
information on the considered situation, see
Paragraph~\ref{sub:Trotter}.

Observe that, by~\Ref{eq:tangent}, the curve $\tau\mapsto
P(\tau,t_o)u$ represents the same tangent vector as $\tau\mapsto
F(\tau,t_o)u$, for all $u$.

We remark that assumption~\ref{it:first} is satisfied, for instance,
when
\begin{enumerate}
\item $F$ is a process, i.e.~$F(s,t_o+t) \circ F(t,t_o) = F(s+t,t_o)$;
\item $F$ is defined combining two commuting Lipschitz semigroups
  through the operator splitting algorithm.
\end{enumerate}
\noindent Above, ``commuting'' is meant in the sense
of~\cite[\textbf{(C)}]{ColomboCorli5}, see
Paragraph~\ref{subsec:operator}

The present construction is similar to that in~\cite{BressanLarnas,
  CalcaterraBleecker, KuhneWacker1, Panasyuk, Panasyuk97}. On one
hand, here we need the function $\omega$ to estimate the speed of
convergence to $0$ in~\Ref{eq:k}, while in~\cite[(3.17)]{Panasyuk} or,
equivalently,~\cite[Condition~4.]{NietoLopez}, any
convergence to $0$ is sufficient. On the other hand, our
Lipschitzeanity requirement~\Ref{eq:Stability} is strictly weaker
than~\cite[(3.16)]{Panasyuk}, allowing to consider balance laws.
Indeed, the semigroup generated by conservation laws in general does
not satisfy~\cite[(3.16)]{Panasyuk}.

Once the global process is built and its properties proved, the
following well posedness result for Cauchy problems in metric spaces
is at hand.

\begin{corollary}
  Let the vector field $v$ in~\Ref{eq:QDE} be defined by a local flow
  satisfying~\ref{it:first}.~and~\ref{it:second}.~in
  Theorem~\ref{thm:main}.  Then, there exists a global process $P$
  whose trajectories are solutions to~\Ref{eq:QDE}. Moreover, for all
  $t_o \in \reali$ and $u_o \in X$ such that $u_o \in
  \mathcal{D}_{t_o}$, let $w \colon [t_o, t_o+\bar t] \mapsto X$ be a
  solution to~\Ref{eq:QDE} satisfying the initial condition
  $w(t_o)=u_o$. If
  \begin{enumerate}
  \item $w$ is Lipschitz,
  \item $w(s) \in \mathcal{D}_{s}$ for all $s \in [t_o,t_o+\bar t]$,
  \end{enumerate}
  then $w$ coincides with the trajectory of $P$ exiting $u_o$ at time
  $t_o$:
  \begin{displaymath}
    w(t_o+t) = P(t,t_o) u
    \qquad \mbox{ for all }t \in [0,\bar t] \,.
  \end{displaymath}
\end{corollary}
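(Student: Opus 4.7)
The existence assertion is essentially built into Theorem~\ref{thm:main}. For $u_o\in\mathcal{D}_{t_o}$ and $t\ge 0$, the process identity~\Ref{eq:process} rewrites $P(t+\tau,t_o)u_o$ as $P(\tau,t_o+t)\bigl(P(t,t_o)u_o\bigr)$, and~\Ref{eq:tangent} applied at $P(t,t_o)u_o\in\mathcal{D}_{t_o+t}$ shows this curve is first-order equivalent to $\tau\mapsto F(\tau,t_o+t)P(t,t_o)u_o$. Since the latter represents $v\bigl(t_o+t,P(t,t_o)u_o\bigr)$ by the local flow property, the trajectories of $P$ solve~\Ref{eq:QDE}.

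For the uniqueness half, fix $s\in(0,\bar t]$ and a partition $0=\tau_0<\tau_1<\cdots<\tau_n=s$ with $\Delta_i:=\tau_i-\tau_{i-1}$. Telescoping $P(s,t_o)u_o$ against the chain of intermediate points $a_i:=P(s-\tau_i,t_o+\tau_i)\,w(t_o+\tau_i)$ (so $a_0=P(s,t_o)u_o$ and $a_n=w(t_o+s)$), using~\Ref{eq:process} together with the Lipschitz dependence of $P$ on the initial datum with constant $L$ (inherited from~\Ref{eq:Stability} in the limit $F^\epsilon\to P$), yields
\[
  d\bigl(w(t_o+s),P(s,t_o)u_o\bigr)
  \;\le\;
  L\sum_{i=1}^{n} d\Bigl(w(t_o+\tau_i),\,P(\Delta_i,t_o+\tau_{i-1})\,w(t_o+\tau_{i-1})\Bigr).
\]
Routing each summand through $F(\Delta_i,t_o+\tau_{i-1})\,w(t_o+\tau_{i-1})$ and invoking~\Ref{eq:tangent}---legitimate since $w(t_o+\tau_{i-1})\in\mathcal{D}_{t_o+\tau_{i-1}}$---bounds it by $\zeta_i+\tfrac{2L}{\ln 2}\,\Delta_i\int_0^{\Delta_i}\omega(\xi)/\xi\,d\xi$, where $\zeta_i:=d\bigl(w(t_o+\tau_i),\,F(\Delta_i,t_o+\tau_{i-1})\,w(t_o+\tau_{i-1})\bigr)$.

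The partition must now be tailored. Because $w$ solves~\Ref{eq:QDE}, writing $\zeta(\tau,h):=d\bigl(w(t_o+\tau+h),\,F(h,t_o+\tau)\,w(t_o+\tau)\bigr)$ (so that $\zeta_i=\zeta(\tau_{i-1},\Delta_i)$), the ratio $\zeta(\tau,h)/h$ tends to $0$ as $h\to 0^+$ for every $\tau\in[0,s)$. Given $\eta,\bar\delta>0$, this furnishes a positive gauge $\tau\mapsto\delta(\tau)\le\bar\delta$ with $\zeta(\tau,h)\le\eta\,h$ for all $h\in(0,\delta(\tau)]$. A Cousin/Vitali-type covering lemma on the compact interval $[0,s]$ then produces a finite partition compatible with this gauge, hence with $\Delta_i\le\bar\delta$ and $\zeta_i\le\eta\,\Delta_i$ at every step. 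Substituting gives
\[
  d\bigl(w(t_o+s),P(s,t_o)u_o\bigr)
  \;\le\;
  L\,\eta\,s \;+\; \frac{2L^2}{\ln 2}\,s\int_0^{\bar\delta}\frac{\omega(\xi)}{\xi}\,d\xi,
\]
and letting $\bar\delta,\eta\to 0^+$ (with $\int_0^\delta\omega/\xi\,d\xi<\infty$) forces $w(t_o+s)=P(s,t_o)u_o$.

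\textbf{Main obstacle.} The technical heart is the covering step: promoting the \emph{pointwise} tangency $\zeta(\tau,h)/h\to 0$ to a finite partition where every step simultaneously obeys $\zeta_i\le\eta\,\Delta_i$. A naive greedy construction from the left need not terminate without upper-semicontinuity of the gauge $\delta(\cdot)$ in $\tau$, so one appeals to a Cousin-type covering lemma; the Lipschitz hypothesis on $w$ (ensuring continuity of $\zeta$, together with the continuity of $F$) is precisely what makes the covering argument work. A naive Gronwall-style iteration of the one-step estimate $d\bigl(w(t_o+t+h),P(t+h,t_o)u_o\bigr)\le L\,d\bigl(w(t_o+t),P(t,t_o)u_o\bigr)+o(h)$ is \emph{not} available here, because $L>1$ in general and iterating would introduce an uncontrolled factor $L^{t/h}$ blowing up as $h\to 0$; this is exactly the point on which the present framework departs from~\cite{Panasyuk}.
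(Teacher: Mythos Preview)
Your proposal is correct and follows precisely the route the paper defers to: the paper omits the proof and cites~\cite[Corollary~1, \S6]{BressanLarnas} and~\cite[Theorem~2.9]{BressanLectureNotes}, and your telescoping via $a_i=P(s-\tau_i,t_o+\tau_i)\,w(t_o+\tau_i)$ together with~\Ref{eq:tangent} is exactly Bressan's argument (and is also the scheme used in the paper's own proof of Proposition~\ref{prop:Euler}).

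One small imprecision is worth flagging. You claim a Cousin-type lemma yields a partition with $\zeta_i\le\eta\,\Delta_i$ \emph{at every step}, i.e.\ a partition that is $\delta$-fine with the tag forced at the \emph{left} endpoint. That one-sided version is false in general (take $\delta(\tau)=(s-\tau)/2$: any left-tagged partition halves the remaining distance to $s$ at each step and never reaches $s$). The standard repair, which is what Bressan actually does and what your remark about Vitali and the Lipschitz hypothesis on $w$ is really pointing at, is: use a Vitali cover by intervals $[\tau,\tau+h]$ with $\zeta(\tau,h)\le\eta h$ to extract finitely many disjoint ``good'' intervals of total length $>s-\epsilon'$, and on the leftover gaps bound the increment crudely by $\bigl(\Lip(w)+\Lip(F)\bigr)\cdot(\text{gap length})$; the total then is at most $L\,\eta\,s+L\bigl(\Lip(w)+\Lip(F)\bigr)\epsilon'+\frac{2L^2}{\ln 2}\,s\int_0^{\bar\delta}\omega(\xi)/\xi\,d\xi$, and one sends $\bar\delta,\eta,\epsilon'\to 0$. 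Your closing comment about the $L^{t/h}$ blow-up is a red herring here: your own telescoping already avoids it, since the single factor $L$ comes from the Lipschitz constant of the \emph{global} map $P(s-\tau_i,t_o+\tau_i)$, not from iterating one-step bounds.
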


\noindent The proof is essentially as that of~\cite[Corollary~1,
\S~6]{BressanLarnas} (see also the proof
of~\cite[Theorem~2.9]{BressanLectureNotes}) and is omitted.

Here, moreover, we show that all Euler polygonals of $F$ converge to
$P$, providing the following estimate on the speed of convergence.

\begin{proposition}
  \label{prop:Euler}
  With the same assumptions of Theorem~\ref{thm:main}, fix
  $u\in\mathcal{D}_{t_o}$. Let $F^E$ be defined as
  in~\Ref{eq:Euler}. If $F^E(s,t_o)\in\mathcal{D}_{t_o+s}$ for any
  $s\in[0,t]$ and $\Delta = \max_h (\tau_{h+1} - \tau_h) \in \left]0,
    \delta \right]$, then the following error estimate holds:
  \begin{displaymath}
    d \left( F^E (t,t_o) u, P(t,t_o) u \right)
    \leq
    \frac{2L^2}{\ln 2} \, t \,
    \int_0^\Delta \frac{\omega(\xi)}{\xi} \, d\xi.
  \end{displaymath}
\end{proposition}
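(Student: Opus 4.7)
The plan is to compare the Euler polygonal to the process $P$ by a standard telescoping argument along the partition $0 = \tau_0 < \tau_1 < \cdots < \tau_{k+1} = t$, and then to apply the tangency estimate \Ref{eq:tangent} on each subinterval.

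More precisely, for $j = 0, 1, \ldots, k+1$, introduce the hybrid trajectory
\begin{displaymath}
  U_j
  =
  P(t - \tau_j, t_o + \tau_j) \circ
  \comp_{h=0}^{j-1} F(\tau_{h+1}-\tau_h, t_o + \tau_h) \, u,
\end{displaymath}
with the convention that the composition is the identity for $j=0$. By construction $U_0 = P(t, t_o)u$ and $U_{k+1} = F^E(t,t_o)u$, so by the triangle inequality
\begin{displaymath}
  d \left( F^E(t,t_o) u,\; P(t,t_o)u \right)
  \leq
  \sum_{j=0}^{k} d(U_j, U_{j+1}).
\end{displaymath}
Using the semigroup property \Ref{eq:process} of $P$ to split $P(t-\tau_j, t_o+\tau_j)$ as $P(t-\tau_{j+1}, t_o+\tau_{j+1}) \circ P(\tau_{j+1}-\tau_j, t_o+\tau_j)$, each summand reduces to the distance between $P$ and $F$ on the single step $[\tau_j, \tau_{j+1}]$ applied to the common intermediate point $w_j = \comp_{h=0}^{j-1} F(\tau_{h+1}-\tau_h, t_o+\tau_h) u$, propagated forward by $P(t-\tau_{j+1}, t_o+\tau_{j+1})$.

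Since $P$ is $L$-Lipschitz in the initial datum (this is inherited from \Ref{eq:Stability} passing to the limit $\epsilon \to 0$ in the construction of $P$ provided by Theorem~\ref{thm:main}), and $\tau_{j+1}-\tau_j \le \Delta \le \delta$, the tangency estimate \Ref{eq:tangent} from part~\ref{fgsfssald3}) gives
\begin{displaymath}
  d(U_j, U_{j+1})
  \leq
  L \cdot (\tau_{j+1}-\tau_j) \cdot
  \frac{2L}{\ln 2} \int_0^{\tau_{j+1}-\tau_j} \frac{\omega(\xi)}{\xi} \, d\xi
  \leq
  \frac{2L^2}{\ln 2} \, (\tau_{j+1}-\tau_j)
  \int_0^\Delta \frac{\omega(\xi)}{\xi} \, d\xi,
\end{displaymath}
where in the last inequality we used monotonicity of $\xi \mapsto \omega(\xi)/\xi \geq 0$ under the integral (or rather just that the integrand is nonnegative and $\tau_{j+1}-\tau_j \leq \Delta$). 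Summing over $j = 0, \ldots, k$ telescopes the $(\tau_{j+1}-\tau_j)$ to $t$, yielding the claimed bound.

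The only point requiring care is the well-definedness of each $U_j$: one needs $w_j \in \mathcal{D}_{t_o+\tau_j}$ so that $P(t-\tau_j, t_o+\tau_j)$ is defined, and also to invoke \Ref{eq:tangent} with initial datum $w_j$ at time $t_o+\tau_j$. This is guaranteed by the standing hypothesis $F^E(s,t_o)u \in \mathcal{D}_{t_o+s}$ for all $s \in [0,t]$, together with $\tau_{j+1}-\tau_j \leq \Delta \leq \delta$. Verifying this compatibility, and confirming that the Lipschitz constant of $P(t,t_o)$ in $u$ is indeed $L$ (rather than something depending on $t$), is the only delicate step; everything else is a clean application of the process property and the tangency inequality already established for $P$.
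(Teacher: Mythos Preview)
Your proof is correct and follows essentially the same route as the paper's: both telescope along the partition via the hybrid points $P(t-\tau_j,t_o+\tau_j)\,F^E(\tau_j,t_o)u$, use the semigroup property~\Ref{eq:process} and the $L$-Lipschitz dependence of $P$ on the datum (Remark~\ref{remark4.6}), and then apply the tangency bound~\Ref{eq:tangent} on each step before summing. Your explicit check of well-definedness via the hypothesis $F^E(s,t_o)u\in\mathcal{D}_{t_o+s}$ is the only point the paper leaves implicit.
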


\subsection{A Condition Weaker than~\Ref{eq:k} Is Not Sufficient}
\label{sub:example}

We now show that requiring~\Ref{eq:k} for $k=1$ only:
\begin{equation}
  \label{eq:2}
  d \left( 
    F(\tau,t_o+\tau)\circ F(\tau,t_o)u , F(2\tau,t_o)u
  \right)
  \leq
  \tau\, \omega(\tau)
\end{equation}
does not allow to prove the semigroup property in
Theorem~\ref{thm:main}.

\begin{proposition}
  \label{prop:no}
  Let the hypotheses of Theorem~\ref{thm:main} hold with~\Ref{eq:k}
  replaced by~\Ref{eq:2}. Assume that~\textbf{(D)} holds. Then, there
  exists a map $P$ as in Theorem~\ref{thm:main}, but with the
  ``semigroup'' condition~\Ref{eq:process} replaced by the weaker:
  \begin{equation}
    \label{it:no:2}
    P(2t,t_o)u = P(t,t_o+t) \circ P(t,t_o)u, 
    \mbox{ for } u \in \mathcal{D}_{t_o},\ t\geq 0,\ t_o,t_o+2t\in I
  \end{equation}
  and only continuous (not necessarily Lipschitz) with respect to
  $(t,t_o)$.
\end{proposition}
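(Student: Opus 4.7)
The plan is to replace the Euler $\epsilon$-polygonals of Theorem~\ref{thm:main} with a \emph{dyadic} refinement scheme, which is the natural object when only the $k=1$ commutation~\Ref{eq:2} is available. For $u \in \hat\mathcal{D}_{t_o}$ and $t_o, t_o+t \in I$ I would set
\begin{equation*}
    Q_n(t,t_o)u \;=\; \comp_{h=0}^{2^n-1} F\bigl(t/2^n,\, t_o + h\,t/2^n\bigr)\,u,
\end{equation*}
well-posed for every $n$ with $t/2^n \leq \delta$: by~\textbf{(D)} each intermediate point stays in $\hat\mathcal{D} \subseteq \mathcal{D}$. The candidate limit is $P(t,t_o)u := \lim_n Q_n(t,t_o)u$, and the entire argument reduces to controlling the telescopic sum $\sum_n d\bigl(Q_n u, Q_{n+1} u\bigr)$.

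The main technical step is the one-step comparison
\begin{equation*}
    d\bigl(Q_n u, Q_{n+1} u\bigr) \;\leq\; \frac{L\,t}{2}\,\omega\bigl(t/2^{n+1}\bigr).
\end{equation*}
The idea: $Q_{n+1}$ is obtained from $Q_n$ by replacing each of its $2^n$ factors $F(\tau, t_o+h\tau)$, where $\tau = t/2^n$, with the pair $F(\tau/2, t_o+h\tau+\tau/2)\circ F(\tau/2, t_o+h\tau)$. Swapping one position at a time, at each swap~\Ref{eq:2} (applied with step $\tau/2$) bounds the local error by $(\tau/2)\,\omega(\tau/2)$, and this error is then propagated through the trailing, still-unrefined $F$-factors, which form an Euler $\epsilon$-polygonal of step $\tau$ and are therefore $L$-Lipschitz in $u$ by~\Ref{eq:Stability}. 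Summing over the $2^n$ swap positions gives the claim. Monotonicity of $\omega$ then gives
\begin{equation*}
    \sum_{n\geq N}\omega\bigl(t/2^{n+1}\bigr) \;\leq\; \frac{1}{\ln 2}\int_0^{t/2^N} \frac{\omega(\xi)}{\xi}\,d\xi,
\end{equation*}
whose right-hand side tends to $0$ as $N \to \infty$. Hence $\{Q_n(t,t_o)u\}$ is Cauchy, and the convergence is uniform on compact subsets of $I\times I \times \hat\mathcal{D}_{t_o}$.

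With $P$ defined as the limit and $\mathcal{D}_{t_o}:=\hat\mathcal{D}_{t_o}$ (which contains $\mathcal{D}^3_{t_o}$ by~\textbf{(D)}), the remaining items are essentially formal: the $L$-Lipschitz bound in $u$ passes from~\Ref{eq:Stability} to the limit; joint continuity in $(t,t_o)$ follows because each $Q_n$ is jointly continuous in $(t,t_o,u)$ (step size and base times depend continuously on $t$, and $F$ is continuous in the second argument) together with uniform convergence; the doubling property~\Ref{it:no:2} is the algebraic identity $Q_{n+1}(2t,t_o) = Q_n(t,t_o+t)\circ Q_n(t,t_o)$ passed to the limit, using the $L$-Lipschitz bound in $u$ to pull the inner limit through the outer factor; and the tangent estimate~\Ref{eq:tangent} drops out of the telescoping bound, since $Q_0(t,t_o)u = F(t,t_o)u$ whenever $t\leq\delta$, giving
\begin{equation*}
    d\bigl(P(t,t_o)u,\, F(t,t_o)u\bigr)
    \;\leq\;
    \frac{L\,t}{2\ln 2}\int_0^t \frac{\omega(\xi)}{\xi}\,d\xi.
\end{equation*}

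The hard part will be the one-step telescoping estimate. With only~\Ref{eq:2} available, comparisons between consecutive refinement levels must be done one swap at a time, which is exactly why paired refinements close up into the doubling relation but non-dyadic subdivisions do not reassemble into a genuine semigroup --- yielding the optimality claim of Paragraph~\ref{sub:example}. The same structural limitation explains why one cannot expect more than continuity of $P$ in $(t,t_o)$: each $Q_n$ depends on $t$ through both the step size and the base times, and only continuity (not Lipschitzeanity) of $F$ in its second argument is assumed, so only continuity survives the limit.
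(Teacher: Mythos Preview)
Your proposal is correct and follows essentially the same route as the paper: the paper's proof simply invokes Remark~\ref{rem:basta2}, which records that the dyadic scheme $F^{t2^{-n}}(t,t_o)u$ (your $Q_n$) converges via Lemma~\ref{lem:integral} using only the $k=1$ case of~\Ref{eq:k}, then obtains tangency from Lemma~\ref{lem:integral} with $m=0$ and the doubling relation from exactly your identity $Q_{n+1}(2t,t_o)=Q_n(t,t_o+t)\circ Q_n(t,t_o)$. One minor slip: under~\textbf{(D)} the inclusion is $\hat\mathcal{D}_{t_o}\subseteq\mathcal{D}^3_{t_o}$, not the reverse you claim, so your choice $\mathcal{D}_{t_o}:=\hat\mathcal{D}_{t_o}$ does not literally recover property~a) of Theorem~\ref{thm:main}; this does not affect the core argument.
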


\noindent The proof is deferred to Section~\ref{sec:Tech}. Note that
$P$ is \emph{not} necessarily a process. Indeed, we show by an example
that this property may fail, due to the fact that condition~\Ref{eq:k}
may fail for $k \geq 2$.

In the metric space $X=[0,1]$ with the usual Euclidean distance we
construct an example showing that the assumptions in
Proposition~\ref{prop:no}, namely~\Ref{eq:2} instead of~\Ref{eq:k}, do
not suffice to show that $P$ is a process.

Let $\phi$ be any non constant function satisfying
\begin{displaymath}
  \phi \in \C1\left([1,2];\reali\right) \mbox{ and }
  \left\{
    \begin{array}{l}
      \phi(1)=\phi(2)=0
      \\
      \phi'(1)=\phi'(2)=0
      \\
      \phi \leq 0
    \end{array}
  \right.
\end{displaymath}
and denote $k(t) = [\log_2 t]$. Define now
\begin{displaymath}
  f(t) = \left\{
    \begin{array}{l@{\ \mbox{ if }}rcl}
      1 & t & = & 0
      \\
      \exp\left( 2^{k(t)} \, \phi\left( 2^{-k(t)} t\right) \right)
      & t & > & 0
    \end{array}
  \right.
  \quad \mbox{ and }\quad
  F(t,t_o)u = f(t) u \,.
\end{displaymath}
The following lemma, listing the main properties of $f$ and $F$ above,
is immediate and its proof omitted.

\begin{lemma}
  \label{lem:phi}
  With the notation above, $f \in \C{0,1} \left( \left[ 0, +\infty
    \right[; \reali \right)$ and $f(2^nt) = f(t)^{2^n}$ for all $t\ge
  0$ and $n\in \naturali$. Choosing $\mathcal{D}_t =X$ for all $t>0$,
  the map $F$ satisfies Definition~\ref{def:local}, in particular
  \begin{displaymath}
    d\left( F(t,t_o)u, F(t',t_o')u' \right)
    \leq
    \left( \max_{[1,2]} \modulo{\phi'} \right) \modulo{t-t'} 
    + 
    \modulo{u-u'}
  \end{displaymath}
  so that $F$ is non expansive in $u$ and the Euler polygonals
  $F^\epsilon$ trivially satisfy~\Ref{eq:Stability}. Finally, $F$
  satisfies~\Ref{eq:2}.
\end{lemma}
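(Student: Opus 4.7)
The plan is to exploit the natural dyadic structure of $f$: on each interval $[2^k, 2^{k+1}]$ the index $k(\cdot)$ is constant equal to $k$ and $f$ has the closed form $f(t) = \exp\left(2^k \phi(2^{-k} t)\right)$, with $2^{-k} t \in [1,2]$. All four assertions of the lemma will then reduce to computations on a single dyadic piece.

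First I would verify the functional equation $f(2^n t) = f(t)^{2^n}$ by direct substitution: since $t \in [2^k, 2^{k+1}]$ implies $2^n t \in [2^{k+n}, 2^{k+n+1}]$, one has $k(2^n t) = k + n$ and the identity falls out of the formula above. The case $t = 0$ is immediate from $f(0) = 1$.

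Next I would show $f \in \C{0,1}$. On each open piece $(2^k, 2^{k+1})$ differentiation gives $f'(t) = f(t)\,\phi'(2^{-k} t)$; since $\phi \leq 0$ yields $0 < f \leq 1$ and $\phi'$ is bounded on $[1,2]$, one obtains $\modulo{f'} \leq \max_{[1,2]} \modulo{\phi'}$ piecewise. The boundary conditions $\phi(1)=\phi(2)=0$ glue $f$ continuously to the value $1$ at each dyadic point $t = 2^k$, while $\phi'(1)=\phi'(2)=0$ makes the one-sided derivatives vanish there, upgrading the uniform Lipschitz bound to all of $(0,+\infty)$. Continuity at $0$ follows from $2^{k(t)} \leq t$ combined with the boundedness of $\phi$, which forces $2^{k(t)} \phi(2^{-k(t)} t) \to 0$ as $t \to 0^+$. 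This step, the matching of one-sided derivatives across all dyadic boundaries, is the only slightly delicate point in the entire proof and is exactly what the four boundary conditions on $\phi$ were designed to handle; apart from this, there is no real obstacle.

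With $f$ Lipschitz in hand, I would check the local-flow axioms for $F(\tau,t_o) u = f(\tau) u$ directly: $F(0,t_o)u = u$ since $f(0) = 1$, and the splitting
\begin{displaymath}
  f(\tau) u - f(\tau') u' = f(\tau)(u - u') + u'\bigl(f(\tau) - f(\tau')\bigr)
\end{displaymath}
combined with $\modulo{f} \leq 1$ and $\modulo{u'} \leq 1$ yields the displayed Lipschitz estimate. Non-expansiveness in $u$ is then immediate, and~\Ref{eq:Stability} with $L=1$ follows because $F^\epsilon$ is a composition of non-expansive maps. The final assertion~\Ref{eq:2} is free: by the functional equation with $n=1$,
\begin{displaymath}
  F(\tau,t_o+\tau) \circ F(\tau,t_o)\, u = f(\tau)^2\, u = f(2\tau)\, u = F(2\tau,t_o)\, u,
\end{displaymath}
so the left-hand side of~\Ref{eq:2} is identically zero for any choice of $\omega$.
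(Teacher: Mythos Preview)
Your argument is correct and complete; the paper itself omits the proof of this lemma as ``immediate,'' so there is nothing to compare against beyond confirming that your dyadic-piece computation is the natural (and essentially only) route. One very minor remark: the conditions $\phi'(1)=\phi'(2)=0$ are not actually needed for the Lipschitz claim $f\in\C{0,1}$---continuity at the dyadic points together with the uniform bound $\modulo{f'}\le\max_{[1,2]}\modulo{\phi'}$ on each open piece already forces the global Lipschitz estimate---so your phrase ``upgrading the uniform Lipschitz bound'' slightly overstates their role (they make $f$ genuinely $\C1$, which is stronger than what the lemma asserts).
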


We now prove that~\Ref{eq:k} is strictly stronger than~\Ref{eq:2} and
that the latter assumption may not guarantee the existence of a gobal
process. Since the local flow $F$ does not depend on $t_o$, we write
$F(t)$ for $F(t,t_o)$.

\begin{proposition}
  \label{prop:example}
  For any fixed $u \in X$, there does not exist any Lipschitz
  semigroup $P$ tangent to $F$ at $u$ in the sense
  of~\Ref{eq:tangent}.
\end{proposition}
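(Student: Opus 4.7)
The plan is to argue by contradiction: assume that for the fixed $u \in X$, with $u \neq 0$, there exists a Lipschitz semigroup $P$ satisfying (\ref{eq:tangent}) at $u$. Since $\int_0^\delta \omega(\xi)/\xi\, d\xi < \infty$, the tangency bound rewrites as $P(t)u = f(t)u + o(t)$ as $t\to 0^+$; this is the only analytic input I will use, together with the Lipschitz dependence of $P$ in its spatial argument and the semigroup identity. (The case $u=0$ is trivially outside the scope of the argument, the identity semigroup being tangent to $F$ there.)

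The first observation is a scaling property of $f$. From $f(2t) = f(t)^2$ and $f(1) = \exp(\phi(1)) = 1$ one deduces $f(2^n) = 1$ for every $n \in \interi$. With $h_m := 2^{-m}$ this yields $P(h_m)u = u + o(h_m)$. Fix now any $s_0 \in (1,2)$ and write $s_0 - 1 = 2^{-J}s'$ with uniquely determined $J\geq 1$ and $s'\in[1,2)$. A direct computation from the definition of $f$, using $k(s_0 h_m) = -m$ and $k((s_0-1)h_m) = -m-J$, gives
\[
f(s_0 h_m) = \exp\bigl(h_m\phi(s_0)\bigr),\qquad f((s_0-1)h_m) = \exp\bigl(2^{-m-J}\phi(s')\bigr),
\]
so that the tangency at $u$ translates into
\[
P(s_0 h_m)u = u + h_m\phi(s_0)\,u + o(h_m),\qquad P((s_0-1)h_m)u = u + 2^{-J}h_m\phi(s')\,u + o(h_m).
\]
I then combine the semigroup identity $P(s_0 h_m) = P((s_0-1)h_m)\circ P(h_m)$ with Lipschitz continuity of $P$ in its spatial variable: the $o(h_m)$-discrepancy between $P(h_m)u$ and $u$ is absorbed by the Lipschitz bound, producing $P(s_0 h_m)u = P((s_0-1)h_m)u + o(h_m)$. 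Equating the two expressions for $P(s_0 h_m)u$, dividing by $h_m u$ and letting $m\to\infty$, I obtain the functional identity
\[
\phi(1+2^{-J}s') = 2^{-J}\phi(s'), \qquad J\geq 1,\ s'\in[1,2).
\]

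To finish, rewrite the identity as $\phi(1+2^{-J}s')/(2^{-J}s') = \phi(s')/s'$: as $J\to\infty$ the left-hand side converges to $\phi'(1) = 0$, forcing $\phi(s') = 0$ for every $s'\in[1,2)$; by continuity $\phi\equiv 0$ on $[1,2]$, contradicting the non-constancy of $\phi$. The main delicate point is the bookkeeping of the $o(h_m)$ remainders in the chain $P(s_0 h_m) = P((s_0-1)h_m)\circ P(h_m)$, which requires the spatial Lipschitz constant of $P$ to be uniform in $t$; the strength of the whole argument then rests on the sharp exponent $-J$ in the functional equation, without which the limit $J\to\infty$ could not be exploited against $\phi'(1)=0$.
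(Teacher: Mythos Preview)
Your argument is correct and takes a genuinely different route from the paper's proof. The paper proceeds in two steps: first it invokes the integral error estimate of~\cite[Theorem~2.9]{BressanLectureNotes} (together with tangency at \emph{every} point of $[0,1]$) to show that $P$ must be multiplicative, $P(t)u = p(t)\cdot u$ with $p(t)=P(t)1$; then it uses the scaling identity $f(t)=f(2^{-n}t)^{2^n}$ and its semigroup analogue $p(t)=p(2^{-n}t)^{2^n}$, combined with the mean value theorem and $p([0,1])\subseteq[0,1]$, to force $p=f$, which contradicts the failure of $f(t+s)=f(t)f(s)$.

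Your approach is more direct and in one respect sharper: you use tangency only at the single point $u$, plus the semigroup identity and the spatial Lipschitz bound, to extract the functional equation $\phi(1+2^{-J}s')=2^{-J}\phi(s')$; the contradiction then comes from letting $J\to\infty$ and exploiting $\phi'(1)=0$. This avoids the external lemma and the multiplicativity step entirely. The trade-off is that you rely on the boundary condition $\phi'(1)=0$, which the paper's argument does not need (it only uses the Lipschitz regularity of $f$, the scaling $f(2t)=f(t)^2$, and the range constraint). Both proofs tacitly require $u\neq 0$, as you correctly point out.
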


\begin{proofof}{Proposition~\ref{prop:example}}
  By contradiction, let $P$ be a Lipschitz semigroup tangent to $F$ in
  the sense of~\Ref{eq:tangent}.

  Since with $P(t)u$ we denote the action of $P$ on the real number
  $u\in[0,1]$, to avoid confusion, we explicitly denote with the dot
  any product between real numbers.
  
  First, we verify that $P$ must be multiplicative, i.e.~, $P(t)u =
  P(t)1\cdot u$ (remember that $P(t)1$ is the action of $P$ on the
  number $1\in X=[0,1]$).  By~\cite[Theorem~2.9]{BressanLectureNotes}
  applied to the Lipschitz curve $\tau\mapsto w(\tau)=P(\tau)1\cdot u$
  \begin{eqnarray*}
    & &
    \modulo{P(t)u - P(t)1\cdot u} \leq
    \\
    & \leq &
    \Lip(P)\cdot \int_0^t 
    \liminf_{h\to 0+} 
    \frac{
      \modulo{P \left(h\right)\left( P(\tau)1 \cdot u\right)  
        - P\left(\tau+h\right)1\cdot u}}{h} \, d\tau
    \\
    & \leq &
    \Lip(P)\cdot \int_0^t 
    \limsup_{h\to 0+} 
    \frac{
      \modulo{P \left(h\right)\left( P(\tau)1 \cdot u\right)  
        - F \left(h\right)\left( P(\tau)1 \cdot u\right)}}{h} \, d\tau\\
    &   & +
    \Lip(P)\cdot \int_0^t 
    \limsup_{h\to 0+} 
    \frac{
      \modulo{F \left(h\right)\left( P(\tau)1 \cdot u\right)
        - P\left(\tau+h\right)1\cdot u}}{h} \, d\tau\\
    &  \leq&
    \Lip(P)\cdot \int_0^t 
    \limsup_{h\to 0+} 
    \frac{
      \modulo{\left(F \left(h\right)\circ P(\tau)1\right) \cdot u
        -\left( P(h)\circ P\left(\tau\right)1\right) \cdot u}}{h} \, d\tau\\
    &\leq& 0
  \end{eqnarray*}
  where we have used the tangency condition~\Ref{eq:tangent} and the
  equality
  \begin{displaymath}
    F \left(h\right)\left( P(\tau)1 \cdot u\right)=f (h)\cdot  P(\tau)1 \cdot u=\left(f \left(h\right)\cdot  P(\tau)1\right) \cdot u
    =\left(F \left(h\right)\circ P(\tau)1\right) \cdot u.
  \end{displaymath}

  We may now denote $p(t)=P(t)1$ so that $P(t)u=p(t)\cdot u$. The
  invariance of $[0,1]$ implies $p(t)=P(t)1 \in [0,1]$ for all $t$
  and, by~\Ref{eq:tangent},
  \begin{displaymath}
    \lim_{t\to 0} \frac{f(t)-p(t)}{t} = \lim_{t\to 0} \frac{F(t)1-P(t)1}{t}=0 \,.
  \end{displaymath}
  Moreover, for any $n\in \naturali$, applying Lagrange Theorem to the
  map $x \mapsto x^{2^n}$ on the interval between $f(2^{-n}t)^{2^n}$
  and $p(2^{-n}t)^{2^n}$,
  \begin{eqnarray*}
    \modulo{f(t) - p(t)}
    & = &
    \modulo{f(2^{-n}t)^{2^n} - p(2^{-n}t)^{2^n}}
    \\
    & \leq & 
    2^n \, \modulo{\bar x}^{2^n-1} \, \modulo{f(2^{-n}t) - p(2^{-n}t)}
    \\
    & \leq &
    \frac{\modulo{f(2^{-n}t) - p(2^{-n}t)}}{2^{-n}}
    \\
    & \to & 
    0 \quad \mbox{ for } n\to +\infty
  \end{eqnarray*}
  Hence $f=p$, but in general $f(t+s) \neq f(t)\,f(s)$, giving a
  contradiction.
\end{proofof}

\Section{Applications}
\label{sec:Applications}

In the autonomous case, we often omit the initial time $t_o$ writing
$F(\tau) u$ for $F(\tau,t_o) u$. Below, \textbf{(D)} is satisfied in
all but one case.

\subsection{Balance Laws}

Consider the following nonlinear system of balance laws:
\begin{equation}
  \label{eq:BL}
  \partial_t u + \partial_x f(u) = G(u)
\end{equation}
where $f \colon \Omega \mapsto \reali^n$ is the smooth flow of a
nonlinear hyperbolic system of conservation laws, $\Omega$ is a non
empty open subset of $\reali^n$ and $G \colon \L1 (\reali;\Omega)
\mapsto \L1 (\reali;\reali^n)$ is a (possibly) \emph{non local}
operator. \cite[Theorem~2.1]{ColomboGuerra} proves that, for small
times, equation~\Ref{eq:BL} generates a global process uniformly
Lipschitz in $t$ and $u$. This result fits in
Theorem~\ref{thm:main}. Indeed, setting $X = \L1(\reali; \Omega)$, it
is possible to show that $F(t)u = S_t u + t G\left(S_tu\right)$ is a
\emph{local flow} of the vector field defined by~\Ref{eq:BL}. Here,
$S$ is the \emph{Standard Riemann
  Semigroup}~\cite[Definition~9.1]{BressanLectureNotes} generated by
the left-hand side in~\Ref{eq:BL}.

Using essentially~\cite[Remark~4.1]{AmadoriGuerra2002} one can first
show that the map $F$ satisfies~\ref{it:first}.~in
Theorem~\ref{thm:main}.  Then, the functional defined
in~\cite[formula~(4.3)]{ColomboGuerra2} allows to prove
also~\ref{it:second}. The details are found in~\cite{ColomboGuerra3},
where the present technique is applied to the diagonally dominant
case~\cite[formula~(1.14)]{DafermosHsiao}, yielding a process defined
globally in time.

\subsection{Constrained O.D.E.s -- The Stop Problem}
\label{subsec:stop}

Let $X$ be a Hilbert space and fix a closed convex subset $C \subseteq
X$. For a positive $T$, let $f \colon [0,T] \times C \mapsto X$.
Consider the following constrained ordinary differential equation:
\begin{displaymath}
  \left\{
    \begin{array}{l}
      \dot u = f(t,u) \,,\       u(t) \in C
      \\
      u(t_o) = u_o
    \end{array}
  \right.
\end{displaymath}
with $u_o \in C$.  The \emph{stop problem} above was considered, for
instance, in~\cite{KrasnoPokro}, see
also~\cite[Section~4.1]{KloedenSadovskyVasilyeva}.

We denote by $\Pi\colon X \mapsto C$ the projection of minimal
distance, i.e.~$\Pi(x) = y$ if and only if $y\in C$ and $\norma{x-y} =
d(x,C)$. Recall that $\Pi$ is non expansive.

This problem fits in the present setting, for instance when $f$ and $C$
satisfy the following conditions:
\begin{enumerate}
\item[\textbf{(f)}] $f$ is bounded. Moreover, there exist $\omega
  \colon [0,T] \mapsto \left[0, \infty\right[$ with $\int_0^T\!
  \frac{\omega(\tau)}{\tau} d\tau < +\infty$ and an $L>0$ such that
  for all $t_1,t_2 \in [0,T]$ and $u_1,u_2 \in C$
  \begin{displaymath}
    \norma{f(t_1,u_1) - f(t_2,u_2)}
    \leq 
    \omega \left( \modulo{t_1-t_2} \right) 
    + 
    L \cdot \norma{u_1-u_2} \,.
  \end{displaymath}
\item[\textbf{(C)}] for a $d>0$, $\Pi \in \C2 \left( \overline{B(C,d)}
    \setminus C; \partial C \right)$ and $\displaystyle
  \sup_{\overline{B(C,d)} \setminus C} \norma{D^2\Pi} < +\infty$.
\end{enumerate}
\noindent As usual, we denote $B(C,d) = \left\{ x \in X \colon d(x,C)
  < d\right\}$.  By~\cite[Theorem~2]{Holmes}, if $X=\reali^n$ and $C$
is a compact convex set with $\C3$ boundary, then~\textbf{(C)} holds.

\begin{proposition}
  \label{prop:stop} Let~\textbf{(f)} and~\textbf{(C)} hold. Then, the
  map $F \colon [0,\delta] \times [0,T] \times C \mapsto C$, with
  $\delta=\min \left\{d /(2\sup\norma{f}), T\right\}$, defined by
  \begin{displaymath}
    F(t,t_o)u = \Pi \left(u + t \, f(t_o,u) \right)
  \end{displaymath}
  is a local flow that satisfies~\textbf{(D)} and the hypotheses of
  Theorem~\ref{thm:main}.
\end{proposition}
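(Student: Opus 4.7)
Plan. Four items must be checked:~$F$ is a local flow (Definition~\ref{def:local}); condition~\textbf{(D)} holds; the Lipschitz stability~\Ref{eq:Stability} of the Euler polygonals; and the commutativity bound~\Ref{eq:k}. The first three are routine. Since $\delta \le d/(2\sup\norma{f})$, any point $u+tf(t_o,u)$ with $t\in[0,\delta]$ and $u\in C$ lies in $\overline{B(C,d/2)}\subset\overline{B(C,d)}$, so $\Pi$ is well defined there and~\textbf{(C)} applies. Non-expansivity of $\Pi$ composed with~\textbf{(f)} yields
\[
 \norma{F(\tau,t_o)u - F(\tau',t_o)u'} \le (1+\tau L)\norma{u-u'} + \sup\norma{f}\cdot\modulo{\tau-\tau'},
\]
and $F(0,t_o)u=\Pi(u)=u$ because $\Pi|_C=\Id$. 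As $\Pi$ has range in $C$, \textbf{(D)} holds with $\hat\mathcal{D}_{t_o}=C$. One-step $(1+\epsilon L)$-Lipschitzianity in $u$ iterates to make $F^\epsilon(t,t_o,\cdot)$ Lipschitz with constant $(1+\epsilon L)^{[t/\epsilon]+1}\le e^{L(T+\delta)}$, yielding~\Ref{eq:Stability}.

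The core is~\Ref{eq:k}. Set $w=u+\tau f(t_o,u)$, $u_1=\Pi(w)=F(\tau,t_o)u$, and $a=u_1+k\tau f(t_o,u)$. The two sides of~\Ref{eq:k} then equal $\Pi(w_1)$ and $\Pi(w_0)$ with
\[
 w_1=a+k\tau\bigl(f(t_o+\tau,u_1)-f(t_o,u)\bigr),\qquad w_0=a+(w-u_1).
\]
The bare non-expansive estimate keeps the defect $\norma{u_1-w}=d(w,C)\le \tau\sup\norma{f}$, giving only an $\mathcal{O}(\tau)$ bound, which is insufficient when $k$ is small. The improvement uses~\textbf{(C)}: the vector $w-u_1=w-\Pi(w)$ is normal to $\partial C$ at $u_1$, and $D\Pi(w)(w-\Pi(w))=0$ because $\Pi$ is constant along the outward ray through $w$. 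By Taylor expansion
\[
 \Pi(w_0) = \Pi(a) + D\Pi(a)(w-u_1) + \mathcal{O}\bigl(\sup\norma{D^2\Pi}\cdot\norma{w-u_1}^2\bigr),
\]
and using $D\Pi(a)(w-u_1) = [D\Pi(a)-D\Pi(w)](w-u_1) = \mathcal{O}\bigl(\sup\norma{D^2\Pi}\cdot\norma{a-w}\cdot\norma{w-u_1}\bigr) = \mathcal{O}(k\tau^2)$, combined with the analogous expansion of $\Pi(w_1)$ whose first-order term has norm at most $k\tau[\omega(\tau)+L\tau\sup\norma{f}]$, one obtains
\[
 \norma{\Pi(w_1)-\Pi(w_0)} \le k\tau\bigl(\omega(\tau)+c\tau\bigr)
\]
for a constant $c$ depending on $L$, $\sup\norma{f}$, and $\sup\norma{D^2\Pi}$. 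Since $\omega(\tau)+c\tau$ still satisfies the Dini integrability condition required by Theorem~\ref{thm:main}, this proves~\Ref{eq:k}.

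The main obstacle is precisely this projection-defect estimate: one must combine the $\C2$ bound from~\textbf{(C)} with the key cancellation $D\Pi(w)(w-\Pi(w))=0$ to gain the extra power of $\tau$ beyond the naive non-expansive bound. Care is also needed in the case distinction according to whether the base points $a,w_0,w_1$ lie inside $C$ (where $\Pi=\Id$ locally and the argument trivializes) or in $\overline{B(C,d)}\setminus C$ (where the $\C2$ structure applies); should a relevant segment cross $\partial C$, it is split at the crossing. Uniformity of the estimate is guaranteed by the bound $\sup\norma{D^2\Pi}<+\infty$ in~\textbf{(C)}.
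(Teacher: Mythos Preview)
Your plan is correct and follows the paper's proof closely: the local-flow axioms, \textbf{(D)}, and~\Ref{eq:Stability} are verified identically, and for~\Ref{eq:k} you use the same decomposition via the intermediate point $a=\Pi(u+\tau f(t_o,u))+k\tau f(t_o,u)$, bounding $\norma{\Pi(w_1)-\Pi(a)}$ by $k\tau\bigl(\omega(\tau)+L\tau\sup\norma{f}\bigr)$ exactly as the paper does. The one genuine difference is in the geometric estimate $\norma{\Pi(a)-\Pi(w_0)}\le K\,k\tau^2$, which the paper isolates as Lemma~\ref{lem:convex} and proves by reducing to a boundary base point $u''\in\partial C$ and observing that the map $(\tau',\tau'')\mapsto\norma{\Pi(\Pi(u''+\tau''v)+\tau'v)-\Pi(u''+(\tau''+\tau')v)}$ is $\C2$ and vanishes on both coordinate axes, hence is $O(\tau'\tau'')$. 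Your explicit Taylor expansion identifies the mechanism behind this mixed-derivative bound: the first-order term $D\Pi(a)(w-u_1)$ is small because $w-u_1\in\ker D\Pi(w)$ and $\norma{D\Pi(a)-D\Pi(w)}$ is controlled by $\sup\norma{D^2\Pi}\cdot\norma{a-w}$. Your argument is more transparent about where the extra factor of $\tau$ comes from; the paper's is shorter and sidesteps tracking individual derivatives. Both approaches share the boundary-crossing subtlety you flag at the end (when $a\in C$ but $w\notin C$, so that $\Pi$ is not globally $\C2$ along the relevant segments), and both treat it at the same level of informality.
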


\begin{proof}
  The continuity of $F$, as well as its Lipschitzeanity in $t$ and
  $u$, is immediate. Condition~\textbf{(D)} holds with
  $\mathcal{D}_{t_o} =C$ for all $t_o$.

  Consider~\Ref{eq:k}. Fix $\tau,\tau' \in [0,\delta]$ and compute:
  \begin{eqnarray*}
    \Delta
    & = &
    \norma{
      F(\tau',t_o+\tau) \circ F(\tau,t_o) u - 
      F( \tau'+\tau,t_o) u
    }
    \\
%     & = &
%     \Big\|
%     \Pi \left( \Pi\left(u+\tau f(t_o,u) \right) 
%       + 
%       \tau' f \left(t_o+\tau, \Pi\left(u+\tau f(t_o,u) \right)
%       \right)
%     \right)
%     \\
%     & &
%     \qquad
%     -
%     \Pi \left( u+(\tau'+\tau)f(t_o,u)\right)
%     \Big\|
%     \\
    & \leq &
    \Big\|
    \Pi \left( \Pi\left(u+\tau f(t_o,u) \right) 
      + 
      \tau' f \left(t_o+\tau, \Pi\left(u+\tau f(t_o,u) \right)
      \right)
    \right)
    \\
    & &
    \qquad
    -
    \Pi \left( \Pi\left(u+\tau f(t_o,u) \right) 
      + 
      \tau' f(t_o, u)
    \right)
    \Big\|
    \\
    & &
    +
    \norma{
      \Pi \left( \Pi\left(u+\tau f(t_o,u) \right) 
        + 
        \tau' f(t_o, u)
      \right)
      -
      \Pi \left( u+(\tau'+\tau)f(t_o,u)\right)
    } \,.
  \end{eqnarray*}
  Consider the two summands above separately. Using~\textbf{(C)}, the
  latter one is estimated in Lemma~\ref{lem:convex}. The former is
  estimated as follows:
  \begin{eqnarray*}
    & &
    \Big\|
    \Pi \left( \Pi\left(u+\tau f(t_o,u) \right) 
      + 
      \tau' f \left(t_o+\tau, \Pi\left(u+\tau f(t_o,u) \right)
      \right)
    \right)
    \\
    & &
    \qquad
    -
    \Pi \left( \Pi\left(u+\tau f(t_o,u) \right) 
      + 
      \tau' f(t_o, u)
    \right)
    \Big\|
    \\
    & \leq & \Big\| \Pi\left(u+\tau f(t_o,u) \right) + \tau' f
    \left(t_o+\tau, \Pi\left(u+\tau f(t_o,u) \right) \right)
    \\
    & & 
    \qquad - \left( \Pi\left(u+\tau f(t_o,u) \right) + \tau'
      f(t_o, u) \right) \Big\|
    \\
    & \leq & \tau' \norma{ f \left(t_o+\tau, \Pi\left(u+\tau f(t_o,u)
        \right) \right) - f(t_o, u)}
    \\
    & \leq & 
    \tau' 
    \left( 
      \omega(\tau) + \left( L\, \sup \norma{f} \right)\, \tau
    \right)
  \end{eqnarray*}
Summing up the two bounds for the two terms we obtain
\begin{eqnarray*}
  \Delta
  & \leq &
  \tau' 
  \left( 
    \omega(\tau) + \left( L\, \sup \norma{f} \right)\, \tau
  \right)
  + 
  K \left( \sup \norma{f}\right) \tau \tau'
  \\
& \leq &
 \tau' \left( \omega(\tau) + (L+K) \left( \sup \norma{f}\right) \tau \right)
\end{eqnarray*}

Passing
  now to~\Ref{eq:Stability}, observe that
  \begin{eqnarray*}
    & &
    \norma{
      \Pi\left(u+\tau f(t_o,u)\right) -
      \Pi\left(w+\tau f(t_o,w)\right)
    }
    \leq
    \\
    & \leq &
    \norma{u - w + \tau \left( f(t_o,u) - f(t_o,w) \right)}
    \\
    & \leq &
    (1 + L \tau) \norma{u-w}
    \\
    & \leq &
    e^{L \tau} \norma{u-w}
  \end{eqnarray*}
  which directly implies~\Ref{eq:Stability}.
\end{proof}

We note that condition~\textbf{(C)} can be slightly relaxed. Indeed,
Proposition~\ref{prop:stop} essentially requires
that~\Ref{eq:StopCondition} is satisfied. This bound can be proved
also with only the $\C{1,\alpha}$ regularity of $\Pi$.

\subsection{Nonlinear Operator Splitting in a Metric Space}
\label{subsec:operator}

The construction in Theorem~\ref{thm:main} generalizes that
in~\cite{ColomboCorli5}. Indeed, consider the case of two Lipschitz
semigroups $S^1,S^2 \colon \reali^+ \times X \mapsto X$ and assume
that they \emph{commute} in the sense
of~\cite[\textbf{(C)}]{ColomboCorli5}, i.e.
\begin{equation}
  \label{eq:commute}
  d\left(S^1_t S^2_t u , S^2_t S^1_t u\right) \leq t\,\omega(t)
  \quad \mbox{ and } \quad
  \int_0^{\delta/2} \frac{\omega(t)}{t} \, dt < +\infty \,.
\end{equation}
for a suitable map $\omega \colon [0, \delta/2] \mapsto \reali$. In
this framework, let $\mathcal{D}_t = X$ for all $t \in \reali^+$, so
that $\mathcal{A} = \reali^+ \times \reali^+ \times X$. Introduce
\begin{equation}
  \label{eq:split}
  F(\tau,t_o) = S^1_\tau \, S^2_\tau \,.
\end{equation}
Note that $F$ is a local flow in the sense of
Definition~\ref{def:local}. The next lemma shows that~\Ref{eq:commute}
implies~\ref{it:first}.~in Theorem~\ref{thm:main}.

\begin{lemma}
  \label{lem:semigroup}
  Let $S^1,S^2$ be Lipschitz semigroups satisfying~\Ref{eq:commute}
  and define $F$ as in~\Ref{eq:split}.  Then,
  \begin{displaymath}
    d \left(
      F ( k \tau, t_o+\tau) \circ F(\tau,t_o) u, 
      F\left( (k+1) \tau, t_o \right)u
    \right)
    \leq
    \Lip(S^1)\, \Lip(S^2) \, k\tau \, \omega(\tau) .
  \end{displaymath}
\end{lemma}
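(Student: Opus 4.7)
The plan is to use the semigroup property of $S^1$ and $S^2$ to reduce the estimate to a single swap of $S^1_\tau$ past $S^2_{k\tau}$, and then telescope that swap into $k$ elementary swaps of $S^1_\tau$ past $S^2_\tau$, each controlled by the commutation hypothesis~\Ref{eq:commute}.

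First, I would expand both compositions using~\Ref{eq:split} and the semigroup property of each $S^i$:
\begin{displaymath}
  F(k\tau,t_o+\tau)\circ F(\tau,t_o)\,u = S^1_{k\tau}\,S^2_{k\tau}\,S^1_\tau\,S^2_\tau\,u,
\end{displaymath}
\begin{displaymath}
  F((k+1)\tau,t_o)\,u = S^1_{(k+1)\tau}\,S^2_{(k+1)\tau}\,u = S^1_{k\tau}\,S^1_\tau\,S^2_{k\tau}\,S^2_\tau\,u.
\end{displaymath}
Setting $w = S^2_\tau u$ and peeling off the common outer factor $S^1_{k\tau}$ at the cost of one $\Lip(S^1)$, the two expressions differ only by a swap, and the distance is bounded by $\Lip(S^1) \cdot d\!\left(S^2_{k\tau} S^1_\tau w,\, S^1_\tau S^2_{k\tau} w\right)$.

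Next, I would telescope the inner distance. Using the semigroup property, $S^2_{k\tau} = (S^2_\tau)^k$, and I would interpolate through the intermediate points
\begin{displaymath}
  A_j = (S^2_\tau)^{k-j}\,S^1_\tau\,(S^2_\tau)^j\,w, \qquad j = 0,\ldots,k,
\end{displaymath}
so that $A_0 = S^2_{k\tau} S^1_\tau w$ and $A_k = S^1_\tau S^2_{k\tau} w$. Consecutive points differ by exactly one elementary swap applied inside the composition:
\begin{displaymath}
  A_{j-1} = (S^2_\tau)^{k-j}\bigl(S^2_\tau\,S^1_\tau\bigr)(S^2_\tau)^{j-1} w,
  \qquad
  A_j = (S^2_\tau)^{k-j}\bigl(S^1_\tau\,S^2_\tau\bigr)(S^2_\tau)^{j-1} w.
\end{displaymath}
Since $(S^2_\tau)^{k-j} = S^2_{(k-j)\tau}$ by the semigroup property, its Lipschitz constant is bounded by the uniform constant $\Lip(S^2)$, not by $\Lip(S^2)^{k-j}$. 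Applying this together with~\Ref{eq:commute} yields $d(A_{j-1},A_j) \le \Lip(S^2)\,\tau\,\omega(\tau)$ for each $j$. Summing over $j=1,\ldots,k$ gives $d(A_0,A_k) \le k\,\Lip(S^2)\,\tau\,\omega(\tau)$, and multiplying by the $\Lip(S^1)$ factor from the first step delivers the claimed bound.

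The main subtlety is exactly this Lipschitz estimate on $(S^2_\tau)^{k-j}$: the naive bound $\Lip(S^2)^{k-j}$ would blow up with $k$ and wreck the final inequality. The crucial move is to rewrite the iterated composition as a single semigroup evaluation $S^2_{(k-j)\tau}$, which by hypothesis has Lipschitz constant uniformly bounded by $\Lip(S^2)$. Once this observation is in place, the rest is bookkeeping and the commutation bound~\Ref{eq:commute} applied $k$ times.
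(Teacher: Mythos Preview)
Your proof is correct and follows essentially the same route as the paper: peel off the outer $S^1_{k\tau}$ at the cost of $\Lip(S^1)$, then telescope the remaining swap of $S^1_\tau$ past $S^2_{k\tau}$ into $k$ elementary commutators, each bounded via~\Ref{eq:commute} after pulling off a single factor of $\Lip(S^2)$ using $S^2_{(k-j)\tau}$ rather than $(S^2_\tau)^{k-j}$. The paper's proof is the same computation with the telescoping index running in the opposite direction and without the auxiliary names $w$ and $A_j$.
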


\begin{proof}
  Compute:
  \begin{eqnarray*}
    & &
    d \left(
      F ( k \tau, t_o+\tau) \circ F(\tau,t_o) u, 
      F\left( (k+1) \tau, t_o \right)u
    \right) =
    \\
    & = &
    d \left( 
      S^1_{k\tau} S^2_{k\tau} S^1_\tau S^2_\tau u,
      S^1_{(k+1)\tau} S^2_{(k+1)\tau}u
    \right)
    \\
    & \leq &
    \Lip(S^1) \cdot d \left( 
      S^2_{k\tau} S^1_\tau S^2_\tau u,
      S^1_{\tau} S^2_{(k+1)\tau}u
    \right)\\
    & \leq &
    \Lip(S^1) \sum_{j=0}^{k-1} d \left(S^2_{(j+1)\tau}S^1_{\tau} 
      S^2_{(k-j)\tau}u, S^2_{j\tau} S^1_\tau S^2_{(k+1-j)\tau} u\right)\\
    & \leq &
    \Lip(S^1) \Lip(S^2) \sum_{j=0}^{k-1} d \left(S^2_{\tau}S^1_{\tau} 
      S^2_{(k-j)\tau}u,S^1_\tau S^2_{\tau}S^2_{(k-j)\tau} u\right)\\
    & \leq &
    \Lip(S^1) \Lip(S^2) \sum_{j=0}^{k-1}\tau\omega(\tau)\\
    & \leq & \Lip(S^1) \Lip(S^2) k\tau\omega(\tau)
  \end{eqnarray*}
  completing the proof.
\end{proof}

The following corollary of Theorem~\ref{thm:main} that
extends~\cite[Theorem~3.8]{ColomboCorli5} is now immediate.

\begin{corollary}
  \label{cor:Splitting}
  Let $S^1, S^2$ be Lipschitz semigroups satisfying~\Ref{eq:commute}.
  Suppose also that the two semigroups are Trotter stable, i.e.~for
  any $t \in [0,T]$, $n \in \naturali \setminus \{0\}$ and $u,w \in
  X$:
  \begin{equation}
    \label{eq:trotterstab}
    d \left(
      \left[S^1_{t/n}S^2_{t/n}\right]^nu,
      \left[S^1_{t/n}S^2_{t/n}\right]^nw
    \right)
    \leq
    \widetilde{C}\cdot d\left(u,w\right). 
  \end{equation}
  Then, the Euler $\epsilon$-polygonals with $F$ as in~\Ref{eq:split}
  converge to a unique product semigroup $P$ tangent to the local flow
  $F(\tau)u=S^1_\tau S^2_\tau u$.
\end{corollary}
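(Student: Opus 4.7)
The plan is to show that the hypotheses of Theorem~\ref{thm:main} are all met by $F(\tau)u = S^1_\tau S^2_\tau u$ and then read off the conclusion, with some further bookkeeping to turn the resulting process into an autonomous semigroup.

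First, I would check that $F$ in~\Ref{eq:split} fulfils Definition~\ref{def:local}. Continuity and the Lipschitz estimate
\[
 d\bigl(F(\tau)u,F(\tau')u'\bigr)\le \Lip(S^1)\Lip(S^2)\bigl(d(u,u')+|\tau-\tau'|\bigr)
\]
are standard consequences of the fact that $S^1$ and $S^2$ are Lipschitz semigroups (using $d(S^i_\tau u, S^i_{\tau'} u) \le \Lip(S^i)\,|\tau-\tau'|$ for the time variable). Since $\mathcal{D}_t=X$ one may take $\hat\mathcal{D}_t=X$, so condition \textbf{(D)} is trivial. Assumption~\ref{it:first} of Theorem~\ref{thm:main} is exactly Lemma~\ref{lem:semigroup}, with the modulus $\omega$ replaced (if needed) by $\Lip(S^1)\Lip(S^2)\,\omega$, which still satisfies the Dini integrability condition in~\ref{it:first}.

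The main point is assumption~\ref{it:second}. Here I would write, for $t\ge 0$ and $\epsilon\in\left]0,\delta\right]$, $t=k\epsilon+r$ with $k=[t/\epsilon]$ and $r\in[0,\epsilon)$, so that
\[
 F^\epsilon(t,t_o)u = S^1_r S^2_r \bigl[S^1_\epsilon S^2_\epsilon\bigr]^{k} u.
\]
Then by the triangle inequality and the Lipschitz estimate on $S^1,S^2$,
\[
 d\bigl(F^\epsilon(t,t_o)u,F^\epsilon(t,t_o)w\bigr)
 \le \Lip(S^1)\Lip(S^2)\, d\bigl([S^1_\epsilon S^2_\epsilon]^{k}u,[S^1_\epsilon S^2_\epsilon]^{k}w\bigr).
\]
If $k\ge 1$, applying the Trotter stability~\Ref{eq:trotterstab} with $n=k$ and $t=k\epsilon$ gives the bound $\widetilde C\,d(u,w)$; if $k=0$, the whole map is just $S^1_rS^2_r$, whose Lipschitz constant is $\Lip(S^1)\Lip(S^2)$. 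In either case one obtains~\Ref{eq:Stability} with $L=\Lip(S^1)\Lip(S^2)\,\max\{\widetilde C,\Lip(S^1)\Lip(S^2)\}$.

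With all hypotheses verified, Theorem~\ref{thm:main} yields a unique global process $P\colon\mathcal A\to X$ tangent to $F$, and Proposition~\ref{prop:Euler} gives the convergence of \emph{every} Euler polygonal of $F$ (in particular every Euler $\epsilon$-polygonal) to $P$, with the quantitative estimate. It remains to see that $P$ is in fact autonomous, i.e.\ that $P(t,t_o)$ does not depend on $t_o$. This is the only subtle point, but it is a soft one: because $F(\tau,t_o)$ is independent of $t_o$, so are all Euler polygonals~\Ref{eq:Euler}; hence their limit $P(t,t_o)u$ is independent of $t_o$, and writing $P_tu:=P(t,0)u$, the process identity~\Ref{eq:process} becomes $P_{t_1+t_2}=P_{t_2}\circ P_{t_1}$, i.e.\ $P$ is a semigroup. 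Uniqueness and tangency to $F$ are inherited directly from Theorem~\ref{thm:main}. The main obstacle in the argument is the passage from Trotter stability, which is stated only along the diagonal $t=n\epsilon$, to the Lipschitzeanity of the full Euler $\epsilon$-polygonal $F^\epsilon(t,t_o)$ for arbitrary $t$; as indicated above, this is handled by peeling off the terminal fractional step $S^1_rS^2_r$.
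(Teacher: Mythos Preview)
Your proposal is correct and follows exactly the route the paper intends: the paper declares the corollary ``immediate'' from Theorem~\ref{thm:main} once $F$ is known to be a local flow (stated before Lemma~\ref{lem:semigroup}), Lemma~\ref{lem:semigroup} supplies hypothesis~\ref{it:first}, and Trotter stability~\Ref{eq:trotterstab} gives hypothesis~\ref{it:second}. Your explicit peeling off of the fractional step $S^1_rS^2_r$ to pass from~\Ref{eq:trotterstab} to~\Ref{eq:Stability}, and your observation that autonomy of $F$ forces autonomy of $P$, are precisely the details the paper leaves to the reader; the only blemish is that your constant $L$ should be $\Lip(S^1)\Lip(S^2)\max\{\widetilde C,1\}$ rather than $\Lip(S^1)\Lip(S^2)\max\{\widetilde C,\Lip(S^1)\Lip(S^2)\}$, but any finite upper bound suffices.
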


Here we do not require the strong commutativity
condition~\cite[\textbf{(C$^*$)}]{ColomboCorli5} as
in~\cite[Theorem~3.8]{ColomboCorli5}, but only its weaker
version~\Ref{eq:commute}. Note also that the
assumption~\cite[\textbf{(S3)}]{ColomboCorli5} requiring $d(S_t u,S_t
w) \leq e^{Ct} \, d(u,w)$ is stronger then the Trotter stability
requirement~\Ref{eq:trotterstab} as shown in the following lemma.

\begin{lemma}
  Fix a positive $T$. Let $S^1,S^2$ be Lipschitz semigroups satisfying
  \begin{displaymath}
    d(S^1_t u, S^1_t w) \leq e^{Ct} d(u,w)
    \quad \mbox{ and } \quad
    d(S^2_t u, S^2_t w) \leq e^{Ct} d(u,w)
  \end{displaymath}
  for $u,w \in X$ and a fixed $C>0$. Then,~\Ref{eq:trotterstab} holds
  for $t \in [0,T]$.
\end{lemma}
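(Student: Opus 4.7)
The plan is to combine the two given exponential Lipschitz bounds multiplicatively at each factor of the product and iterate. First, for any fixed $\tau\geq 0$ and $u,w\in X$, applying the hypothesis on $S^2$ and then on $S^1$ gives
\begin{displaymath}
d\!\left(S^1_\tau S^2_\tau u,\, S^1_\tau S^2_\tau w\right)
\leq e^{C\tau}\,d\!\left(S^2_\tau u,\, S^2_\tau w\right)
\leq e^{2C\tau}\,d(u,w),
\end{displaymath}
so the single building block $S^1_\tau S^2_\tau$ is $e^{2C\tau}$-Lipschitz in its spatial variable.

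With $\tau=t/n$, I would then iterate this estimate $n$ times over the composition $\left[S^1_{t/n}S^2_{t/n}\right]^n$: a simple induction on $j=1,\dots,n$ shows that each step multiplies the distance by a factor at most $e^{2Ct/n}$, so the total Lipschitz constant telescopes to
\begin{displaymath}
\left(e^{2Ct/n}\right)^n = e^{2Ct}\leq e^{2CT}.
\end{displaymath}
Choosing $\widetilde{C}=e^{2CT}$ then yields~\Ref{eq:trotterstab}, uniformly in $t\in[0,T]$ and $n\in\naturali\setminus\{0\}$.

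No step presents a real obstacle; the argument is essentially one line of bookkeeping. The only point worth emphasizing is that the \emph{exponential} form of each individual bound, $e^{C\tau}$ rather than a generic constant $\Lip(S^i)$, is exactly what keeps the Lipschitz constant of the $n$-fold product bounded uniformly in $n$: a merely constant bound would produce $\Lip(S^1)^n\Lip(S^2)^n$, which blows up as $n\to\infty$, whereas here the fact that the exponent scales linearly in $\tau$ lets the $n$ factors collapse to a single $e^{2Ct}$.
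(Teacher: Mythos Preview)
Your proof is correct and is exactly the natural argument: compose the two exponential Lipschitz bounds to see that one block $S^1_{t/n}S^2_{t/n}$ is $e^{2Ct/n}$-Lipschitz, then iterate so the factors collapse to $e^{2Ct}\leq e^{2CT}$. The paper does not give its own proof here but simply refers to \cite[formula~(3.3) in Proposition~3.2]{ColomboCorli5}, which is precisely this one-line computation.
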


For a proof, see~\cite[formula~(3.3) in
Proposition~3.2]{ColomboCorli5}.

\subsection{Trotter Formula for Linear Semigroups}
\label{sub:Trotter}

The present non linear framework recovers, under slightly different
assumptions, the convergence of Trotter formula~\cite{Trotter} in the
case of linear semigroups, see~\cite[Theorem~3]{KuhneWacker1}.

\begin{proposition}
  \label{prop:Trotter}
  Let $S^1,S^2$ be strongly continuous semigroups on a Banach space
  $X$. Assume that there exists a normed vector space $Y$ which is
  densely embedded in $X$ and invariant under both semigroups such
  that:
  \begin{enumerate}[(a)]
  \item the two semigroups are locally Lipschitz in time on $Y$,
    i.e.~there exists a compact map $K \colon Y \mapsto \reali$ such
    that for $i=1,2$
    \begin{displaymath}
      \norma{S^i_t u - S^i_{t'}u}_X
      \leq K(u) \, \modulo{t-t'} \,\mbox{ for all }u\in Y,\ t,t'\in I.
    \end{displaymath}
  \item the two semigroups are exponentially bounded on $F$ and
    locally Trotter stable on $X$ and $Y$, i.e.~there exists a
    constant $H$ such that for all $t \in I$,
    $n\in\naturali\setminus\{0\}$
    \begin{displaymath}
      \norma{S^1_t}_Y+\norma{S^1_t}_Y+\norma{ \left( S^1_{t/n} \, S^2_{t/n} 
        \right)^n}_X
      +
      \norma{ \left( S^1_{t/n} \, S^2_{t/n} \right)^n}_Y
      \leq H \,;
    \end{displaymath}
  \item the commutator condition
    \begin{displaymath}
      \frac{1}{t} \, \norma{S^2_t S^1_t u - S^1_t S^2_t u}_X
      \leq
      \omega(t) \, \norma{u}_Y \,.
    \end{displaymath}
    is satisfied for all $u \in Y$ and $t \in [0, \delta]$ with some
    $\delta > 0$, and for a suitable $\omega \colon [0,\delta] \mapsto
    \reali^+$ with $\int_0^{\delta} \frac{\omega(\tau)}{\tau} \, d\tau
    < +\infty$.
  \end{enumerate}
  \noindent Then, there exists a global semigroup $P \colon
  \left[0,+\infty \right[ \times X \to X$ such that
  \begin{enumerate}[(I)]
  \item for all $u \in Y$, there exists a constant $C_u$ such that
    \begin{displaymath}
      \frac{1}{t} \, \norma{P(t)u - S^1_t S^2_t u}_{X}
      \leq
      C_u \int_0^t \frac{\omega(\xi)}{\xi} \, d\xi \,.
    \end{displaymath}
  \item all Euler polygonals with initial data in $Y$, as defined
    in~\Ref{eq:Euler} and in~\Ref{eq:polygonal} with
    $F(t)=S^1_tS^2_t$, converge to the orbits of $P$.
  \end{enumerate}
\end{proposition}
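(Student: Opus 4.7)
The plan is to apply Theorem~\ref{thm:main} with the local flow $F(\tau,t_o) u = S^1_\tau S^2_\tau u$ on a closed subset $\mathcal{D}\subseteq Y$, and then to extend the resulting process from $Y$ to all of $X$ by exploiting the linearity of the Euler polygonals and the density of $Y$. Fix $M>0$ and take $\mathcal{D} = \left\{u\in Y \colon \norma{u}_Y \leq R\right\}$ with $R\geq H^{9} M$, which (as is standard for a continuous dense embedding) is closed in $X$. Assumption~(a), together with the automatic uniform bound $\norma{S^i_t}_X\leq C_T$ for $t\in I$ enjoyed by every $C_0$-semigroup, shows that $F$ is a local flow on $\mathcal{D}$ in the sense of Definition~\ref{def:local}.

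Condition~\ref{it:second} of Theorem~\ref{thm:main} follows directly from the linearity of $F^\epsilon$ and the Trotter stability in~(b):
\[
\norma{F^\epsilon(t)u - F^\epsilon(t)w}_X
= \norma{S^1_{t-k\epsilon} S^2_{t-k\epsilon}(S^1_\epsilon S^2_\epsilon)^k(u-w)}_X
\leq C_T^2 H \, \norma{u-w}_X,
\]
so $L = C_T^2 H$. For condition~\ref{it:first} I would mimic the telescope of Lemma~\ref{lem:semigroup}, replacing the exact commutativity used there by the approximate commutator bound~(c). Writing
\[
F(k\tau)\circ F(\tau)u - F\bigl((k+1)\tau\bigr)u
= S^1_{k\tau}\bigl[S^2_{k\tau}S^1_\tau - S^1_\tau S^2_{k\tau}\bigr]S^2_\tau u
\]
and expanding the bracket as
\[
S^2_{k\tau} S^1_\tau - S^1_\tau S^2_{k\tau}
= \sum_{j=0}^{k-1}(S^2_\tau)^{k-j-1}\bigl[S^2_\tau S^1_\tau - S^1_\tau S^2_\tau\bigr](S^2_\tau)^{j},
\]
each summand is controlled by~(c) applied to $(S^2_\tau)^j S^2_\tau u\in Y$, by~(b) to bound $\norma{S^2_{(j+1)\tau}u}_Y\leq H\norma{u}_Y$, and by the $C_0$-bound on the outer factors. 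This gives $\norma{F(k\tau)\circ F(\tau)u - F((k+1)\tau)u}_X \leq C_T^{2} H\, k\tau\omega(\tau)\norma{u}_Y$ for every $u\in\mathcal{D}$, so that~\ref{it:first} holds on $\mathcal{D}$ with $\omega_M(\tau) = C_T^{2} H R\,\omega(\tau)$. Moreover, the $Y$-stability in~(b) shows that each Euler polygonal grows the $Y$-norm by at most $H^3$, whence $\left\{u\in Y\colon \norma{u}_Y\leq M\right\}\subseteq \mathcal{D}^3_{t_o}$, and Theorem~\ref{thm:main} produces a process $P^M$ on this smaller ball, uniformly Lipschitz in the $X$-distance.

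Since every Euler polygonal is a linear operator on $X$, $P^M$ is linear on its domain; letting $M\to\infty$ yields a linear map $P(t)\colon Y\to X$. The uniform Lipschitz constant $L$ from~\ref{it:second} and the density of $Y$ in $X$ extend $P(t)$ to a bounded linear operator on $X$, while the process property on $Y$ combined with $X$-continuity yields the semigroup identity on $X$. Assertion~(I) is then exactly Theorem~\ref{thm:main}~\ref{fgsfssald3} with $C_u = \tfrac{2L}{\ln 2}\, C_T^{2} H\, \norma{u}_Y$, and assertion~(II) is Proposition~\ref{prop:Euler} applied to any Euler polygonal starting in $Y$.

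The main obstacle is the interaction between the two topologies in the hypothesis: the commutator bound~(c) is intrinsically $Y$-to-$X$, whereas Theorem~\ref{thm:main} demands a purely metric estimate on $\mathcal{D}$, independent of the initial datum. This forces the choice of $\mathcal{D}$ as a $Y$-ball closed in $X$, the foliation of the construction by the $Y$-radius $M$, and the explicit verification that the smaller ball of radius $M$ sits in $\mathcal{D}^3_{t_o}$---a substitute for the condition~\textbf{(D)}, which is not directly available here. Once this balancing is in place, linearity, density and Theorem~\ref{thm:main} complete the proof.
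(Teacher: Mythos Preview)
Your approach is essentially the paper's own: work on a $Y$-ball, verify the hypotheses of Theorem~\ref{thm:main} using~(a) for time-Lipschitzianity, the $X$-Trotter bound in~(b) for~\Ref{eq:Stability}, the telescoping of Lemma~\ref{lem:semigroup} combined with~(c) for~\Ref{eq:k}, and the $Y$-Trotter bound in~(b) to ensure that a smaller $Y$-ball lies in $\mathcal{D}^3_{t_o}$; then let the radius grow and extend to $X$ by linearity and density. The paper uses the factor $H^6$ where you use $H^9$, but that is only a matter of bookkeeping.

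There is one genuine slip. You assert that the $Y$-ball $\{u\in Y:\norma{u}_Y\le R\}$ is closed in $X$ ``as is standard for a continuous dense embedding''. This is false in general: for a normed (not necessarily complete or reflexive) space $Y$ densely and continuously embedded in a Banach space $X$, closed $Y$-balls need not be $X$-closed---think of $Y=\C{1}([0,1])$ inside $X=\C{0}([0,1])$. The paper avoids this by taking $\mathcal{D}=\mathrm{cl}_X\,\mathcal{D}_M$, the $X$-closure of the $Y$-ball. Your estimate for~\Ref{eq:k}, which on $\mathcal{D}_M$ reads $\norma{F(k\tau)F(\tau)u-F((k+1)\tau)u}_X\le C\,k\tau\,\omega(\tau)$ uniformly in $u$, then extends to $\mathrm{cl}_X\,\mathcal{D}_M$ because each $F(s)=S^1_sS^2_s$ is a bounded linear operator on $X$; the same continuity argument extends the time-Lipschitz bound. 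With this correction your argument goes through and coincides with the paper's.
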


\begin{remark}
  With respect to~\cite[Theorem~3]{KuhneWacker1}, the regularity
  assumptions on the semigroups are stronger due to~(a). On the other
  hand, (c) is weaker and we further obtain~(I) and~(II).
\end{remark}

In the theorem above, standard techniques allow to relate the
generators of $S^1$ and $S^2$ with that of $P$,
see~\cite[Proposition~4.1]{Chernoff} and
\cite[Theorem~3]{KuhneWacker1}.

\begin{proofof}{Proposition~\ref{prop:Trotter}}
  Fix a positive $M$ and define
  \begin{displaymath}
    \mathcal{D}_M
    =
    \left\{ u\in Y\colon \norma{u}_Y\leq M\right\}
    \quad \mbox{ and }\quad
    \mathcal{D}_M^*
    =
    \left\{ u\in Y\colon \norma{u}_Y\leq \frac M {H^6}\right\} \,.
  \end{displaymath}
  In the setting of Theorem~\ref{thm:main}, consider the metric space
  $\left(X,\norma{\cdot}_X\right)$ and define
  \begin{displaymath}
    F(t)u 
    =
    S^1_t S^2_t u
    \quad \mbox{ and }\quad
    \mathcal{D} 
    =
    {\mathrm{cl}} \mathcal{D}_M
  \end{displaymath}
  where the closure is meant with respect to the $X$ norm. It is
  straightforward to show that $F$ is a Lipschitz local flow on
  $\mathcal{D}$.  The stability (with respect to the $X$ norm) of the
  Euler $\epsilon$--polygonals implies hypothesis 2.  Computations
  similar to those in the proof of Lemma~\ref{lem:semigroup} show that
  also hypothesis 1. is satisfied. Hence, Theorem~\ref{thm:main}
  yields the existence of a process tangent to the local flow $F$. The
  stability (in the $Y$ norm) of the Euler $\epsilon$--polygonals
  ensures that ${\mathrm{cl}} \mathcal{D}^*_M \subseteq
  \mathcal{D}^3_{t_o}$ for any $t_o\in I$.  Finally, the arbitrariness
  of $M$ allows us to extend $P$ to all $Y$ and, by density, to all
  $X$.
\end{proofof}

\subsection{Hille-Yosida Theorem}

This section is devoted to show that Theorem~\ref{thm:main}
comprehends the constructive part of Hille-Yosida Theorem,
see~\cite[Chapter~II, Theorem~3.5]{EngelNagel}
or~\cite[Theorem~12.3.1]{HillePhillips}. The resulting proof, which we
sketch below, is similar to the original proof by Hille,
see~\cite[p.~362]{HillePhillips}.

\begin{theorem}
  \label{thm:HY}
  Let $X$ be a Banach space and $A$ be a linear operator with domain
  $D(A)$. If $\left( A, D(A)\right)$ is closed, densely defined and
  for all $\lambda > 0$ one has $\lambda \in \rho(A)$ and
  $\norma{\lambda \, R(\lambda,A)} \leq 1$, then $A$ generates a
  strongly continuous contraction semigroup.
\end{theorem}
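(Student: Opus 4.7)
The plan is to realize the desired contraction semigroup as the global process produced by Theorem~\ref{thm:main}, applied to the backward Euler scheme; this essentially retraces Hille's original argument. Set $J_\tau = (\Id - \tau A)^{-1}$ for $\tau > 0$ and $J_0 = \Id$, so that $\norma{J_\tau} \le 1$ by the resolvent bound. For each integer $M > 0$ introduce the set
\begin{displaymath}
  \mathcal{D}_M
  =
  \left\{ u \in D(A^2) \colon \norma{u} + \norma{Au} + \norma{A^2 u} \leq M \right\} ,
\end{displaymath}
which is closed in $X$ since $A$ and $A^2$ are closed. Define the autonomous candidate local flow $F(\tau,t_o)\, u = J_\tau u$. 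The identity $J_\tau - J_{\tau'} = (\tau - \tau') \, A J_\tau J_{\tau'}$ together with the commutation $A J_\tau = J_\tau A$ on $D(A)$ makes $F$ Lipschitz in $\tau$ and $u$ uniformly on $\mathcal{D}_M$, and condition~\textbf{(D)} holds with $\hat \mathcal{D}_{t_o} = \mathcal{D}_M$ since $J_\tau$ commutes with both $A$ and $A^2$ and is a contraction.

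Hypothesis~\ref{it:second} of Theorem~\ref{thm:main} is immediate: every Euler $\epsilon$-polygonal is a composition of the contractions $J_\cdot$, so one may take $L = 1$. The heart of the matter is hypothesis~\ref{it:first}, which I would derive from the algebraic identity
\begin{displaymath}
  J_{k\tau} J_\tau - J_{(k+1)\tau}
  =
  - \, k \tau^2 \, A^2 \, J_{k\tau} J_\tau J_{(k+1)\tau} ,
\end{displaymath}
itself read off from the rational identity $\frac{1}{(1-k\tau z)(1-\tau z)} - \frac{1}{1-(k+1)\tau z} = \frac{-k\tau^2 z^2}{(1-k\tau z)(1-\tau z)(1-(k+1)\tau z)}$. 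For $u \in \mathcal{D}_M$, pulling $A^2$ through the commuting resolvents yields $\norma{J_{k\tau} J_\tau u - J_{(k+1)\tau} u} \leq k\tau^2 \, \norma{A^2 u} \leq M \cdot k \tau^2$, so~\Ref{eq:k} holds with $\omega(\tau) = M\tau$, trivially satisfying $\int_0^\delta \omega(\tau)/\tau \, d\tau < +\infty$.

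Theorem~\ref{thm:main} thus produces, for each $M$, a Lipschitz process $P_M$ on $\mathcal{D}_M$, and its uniqueness forces $P_M \equiv P_{M'}$ on $\mathcal{D}_M \cap \mathcal{D}_{M'}$, so the $P_M$'s patch together into a semigroup $P$ on $\bigcup_M \mathcal{D}_M = D(A^2)$. Each $P_M$ is a contraction, being a limit of contractive Euler polygonals. Since $\lambda R(\lambda,A) u \to u$ as $\lambda \to +\infty$ for every $u \in X$ (a standard consequence of $\norma{\lambda R(\lambda,A)} \leq 1$ and density of $D(A)$), iterating the resolvent produces, for any $u \in X$, approximants in $D(A^2)$; hence $D(A^2)$ is dense in $X$, and $P$ extends by uniform continuity to a contraction semigroup on all of $X$. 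Strong continuity at $t=0$ transfers from $D(A^2)$, where it is inherited from the Lipschitz dependence supplied by Theorem~\ref{thm:main}, to $X$ by density and contractivity. A final identification along the lines of~\cite[Proposition~4.1]{Chernoff}, combined with the tangency estimate~\Ref{eq:tangent}, shows that the infinitesimal generator of $P$ coincides with $A$ on $D(A)$.

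The main technical point is the commutator estimate~\Ref{eq:k}. A single derivative of regularity is not sufficient: $u \in D(A)$ only yields $\norma{J_{k\tau} J_\tau u - J_{(k+1)\tau} u} \leq 2\tau \norma{Au}$, which is of order $k\tau \cdot (\norma{Au}/k)$ and hence corresponds to an $\omega$ that does not vanish as $\tau \to 0$. It is exactly this obstruction that forces the choice of $D(A^2)$ and thus the two-step procedure: construct $P$ on bounded balls of $D(A^2)$ using Theorem~\ref{thm:main}, then extend to $X$ by density and contractivity.
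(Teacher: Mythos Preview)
Your approach is essentially identical to the paper's: both apply Theorem~\ref{thm:main} to the backward Euler local flow $F(t)u=(\Id-tA)^{-1}u$ on bounded pieces of $D(A^2)$, verify~\Ref{eq:k} with $\omega(\tau)\sim M\tau$, and then extend the resulting contraction semigroup to all of $X$ by density of $D(A^2)$. One small caveat: closedness of $A$ and $A^2$ does \emph{not} imply that your $\mathcal{D}_M$ is closed in $X$ when $X$ is non-reflexive, so you should take its $X$-closure as the paper does---the key estimates then extend automatically because each $J_\tau$ is a bounded operator. Your direct rational identity for $J_{k\tau}J_\tau-J_{(k+1)\tau}$ is in fact a bit cleaner than the paper's triangle-inequality derivation and gives the sharper constant $\omega(\tau)=M\tau$ in place of the paper's $3M\tau$.
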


Above, as usual, $\rho(A)$ is the resolvent set of $A$ and
$R(\lambda,a)$ is the resolvent. Note that we consider only the
contractive case and refer to~\cite[Theorem~3.8,
Chapter~II]{EngelNagel} to see how the general case can be recovered.

\begin{proofof}{Theorem~\ref{thm:HY}}
  Fix a positive $M$ and define
  \begin{eqnarray*}
    \mathcal{D}_t = \mathcal{D}
    & = & 
    {\mathrm{cl}} \left\{ 
      u \in D\left(A^2\right) \colon \ 
      \norma{A u} \leq M \mbox{ and } \norma{A^2u} \leq M 
    \right\}
    \\
    F(t) u 
    & = &
    \left\{
      \begin{array}{l@{\qquad}rcl}
        \displaystyle \frac{1}{t}\, R\left( \frac{1}{t}, A \right) u 
        & t & > & 0 \,,
        \\
        u & t & = & 0 \,.
      \end{array}
    \right.
  \end{eqnarray*}
  It is easy to see that $F(t) \mathcal{D} \subseteq \mathcal{D}$, so
  that~\textbf{(D)} holds. Moreover, the stability
  condition~\ref{it:second}.~in Theorem~\ref{thm:main} trivially holds
  in the contractive case.  Recall the usual identities for the
  resolvent (see~\cite[Chapter~IV]{EngelNagel})
  \begin{eqnarray*}
    & & 
    \lambda \, R(\lambda,A) u = u + R(\lambda, A) \, A\, u\quad
    \mbox{ for any }u\in D(A)
    \\
    & & 
    R(\lambda,A)-R(\mu,A)=\left(\mu-\lambda\right)R(\lambda,A)R(\mu,A).
  \end{eqnarray*}
  Written using the local flow $F$, these identities become
  \begin{eqnarray*}
    & & 
    F(t) u = u + tF(t)Au\quad  \mbox{ for any }u\in D(A)
    \\
    & & 
    tF(t)-sF(s)=(t-s)F(t) F(s) \, .
  \end{eqnarray*}
  Now we first show that $F$ is Lipschitz in $t$ and $u$. The
  Lipschitz continuity with respect to $u$ is a straightforward
  consequence of the bound on the resolvent norm. Concerning the
  variable $t$, we take $u\in D(A)$ with $\norma{Au} \leq M$ and
  consider two cases:
  \begin{displaymath}
    \begin{array}{rclcl}
      \norma{F(t)u-F(0)u}
      & = &
      \norma{tF(t)Au}
      &\leq& 
      t M
      \\
      \norma{F(t)u-F(s)u}
      & = &
      \norma{tF(t)Au-sF(s)Au} 
      &\leq&
      \norma{(t-s)F(t)F(s)Au}
      \\
      &\leq& 
      \modulo{t-s} M \,.
    \end{array}
  \end{displaymath}
  Since $F(t)$ is a bounded operator, the Lipschitz continuity with
  constant $M$ extends to all $\mathcal{D}$.  We are left to prove
  hypothesis~\ref{it:first}.~in Theorem~\ref{thm:main}. For $u\in
  D(A^2)$ with $\norma{A^2u}\leq M$, $\norma{Au}\leq M$, compute
  \begin{eqnarray*}
    & &
    \norma{F(t) F(s) u - F(t+s) u} =
    \\
    & = &
    \norma{
      \left(\Id + t F(t)A\right) \left(\Id + s F(s)A\right)u
      -
      \left(\Id + (t+s) F(t+s)A\right)u
    }
    \\
    & = &
    \norma{
      t F(t)A u +  s F(s)A u + t s F(t) F(s) A^2 u  - (t+s) F(t+s) A u
    }
    \\
    & \leq &
    t s \norma{A^2 x} 
    +
    t\norma{
      \left(F(t)-F(t+s)\right)A u}
    +
    s\norma{\left(F(s)-F(t+s)\right)A u}
    \\
    & \leq&
    t s M 
    +
    ts\norma{A^2 u}
    +
    st\norma{A^2 u}
    \\
    & \leq &
    3 t s M \,.
  \end{eqnarray*}
  Again, the boundedness of $F(t)$ allows us to extend the inequality
  \begin{displaymath}
    \norma{F(t) F(s) u - F(t+s) u} \leq 3stM
  \end{displaymath}
  to all $u\in\mathcal{D}$.  Therefore, letting $t=k\tau$, $s=\tau$,
  Theorem~\ref{thm:main} applies with $\omega(\tau) = 3M\tau$. Due to
  the arbitrariness of $M$ and the density of $D(A^2)$, the resulting
  semigroup can be extended to all $X$. Standard computations,
  see~\cite[p.~362--363]{HillePhillips}, show that $A$ is the
  corresponding generator.
\end{proofof}

Note that, by Proposition~\ref{prop:Euler}, we also provide the
convergence of all polygonal approximation.

\subsection{The Heat Equation}

Let $T$, $\delta$ be positive and $X = \CB0 (\reali; \reali)$ be the
set of continuous and bounded real functions defined on $\reali$
equipped with the distance $d (u,w) = \norma{u-w}_{\C0}$, where
$\norma{u}_{\C0} = \sup_{x \in \reali} \norma{u(x)}$. Fix a positive
$M$ and for all $t_o \geq 0$ let $\mathcal{D}_{t_o} = \mathcal{D}$ be
the subset of $X$ consisting of all twice differentiable functions
whose second derivative $u''$ satisfies $\max\left\{ \norma{u''},
  \Lip(u'') \right\} \leq M$. For $t \in [0,\delta]$, $t_o \in \reali$
and $u \in X$, using the numerical algorithm~\cite[Chapter~9,
\S~4]{IsaacsonKeller}, define
\begin{displaymath}
  \left( F(t) u \right) (x)
  =
  u(x) +
  \frac{u \left(x-2\sqrt{t}\right) - 2 u(x) +u \left(x+2\sqrt{t}\right)}{4}
\end{displaymath}

\begin{proposition}
  $F$ is a local flow satisfying~\Ref{eq:k} and~\Ref{eq:Stability}.
\end{proposition}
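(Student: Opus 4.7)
The plan is to verify the local flow axioms, then condition~\Ref{eq:Stability}, and finally the main estimate~\Ref{eq:k}, which is the principal obstacle.

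First, I would observe that $F(t)u(x) = \tfrac{1}{4}u(x-2\sqrt{t}) + \tfrac{1}{2}u(x) + \tfrac{1}{4}u(x+2\sqrt{t})$ is a convex combination of three translates of $u$. This immediately yields $F(0,t_o)u = u$, joint continuity, and the non-expansiveness $\norma{F(t)u - F(t)w}_{\C0} \leq \norma{u-w}_{\C0}$. Moreover $F(t)\mathcal{D} \subseteq \mathcal{D}$, since $(F(t)u)''$ is the same convex combination of translates of $u''$, which preserves both the sup-norm bound and the Lipschitz constant. Non-expansiveness propagates to every Euler polygonal, so~\Ref{eq:Stability} holds with $L=1$.

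Next, for the Lipschitz dependence of $F$ on $t$ uniformly in $u \in \mathcal{D}$, I would introduce the second-difference operator $L_h u(x) = u(x+h) + u(x-h) - 2u(x)$, so that $F(t) = \Id + \tfrac{1}{4} L_{2\sqrt t}$. The integral representation $L_h u(x) = \int_{-h}^h (h-|r|)u''(x+r)\,dr$ allows one to split $L_{h_1}u - L_{h_2}u$ (for $h_1 \geq h_2$) into the contribution from the extended region $\{h_2 < |r| \le h_1\}$ and the contribution from the change of weight on $\{|r| \le h_2\}$, yielding $\norma{L_{h_1}u - L_{h_2}u}_{\C0} \leq M(h_1-h_2)^2 + 2Mh_2|h_1-h_2|$. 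With $h_i = 2\sqrt{t_i}$, the elementary inequalities $(h_1-h_2)^2 \leq 4|t-s|$ and $\sqrt{s}\,|\sqrt{t}-\sqrt{s}| = \sqrt{s}\,|t-s|/(\sqrt{t}+\sqrt{s}) \leq |t-s|$ then give $\norma{F(t)u-F(s)u}_{\C0} \leq 3M|t-s|$.

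The main estimate is~\Ref{eq:k}. Set $a = 2\sqrt{\tau}$, $b = 2\sqrt{k\tau}$, $c = 2\sqrt{(k+1)\tau}$, with $a \leq b \leq c$ for $k \geq 1$ and the Pythagorean identity $a^2 + b^2 = c^2$. A direct expansion gives
\begin{displaymath}
F(k\tau) \circ F(\tau) - F((k+1)\tau)
=
\tfrac{1}{4}\bigl[L_a + L_b - L_c\bigr] + \tfrac{1}{16}\,L_b L_a \,.
\end{displaymath}
Writing $L_h u = h^2 u''(x) + \epsilon_h(x)$ with $\epsilon_h(x) = \int_{-h}^h (h-|r|)[u''(x+r) - u''(x)]\,dr$, the $u''$-coefficient in the bracket vanishes identically because $a^2+b^2 = c^2$. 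To control $\epsilon_a + \epsilon_b - \epsilon_c$ I would use the identity $\tfrac{d}{dh}\epsilon_h(x) = u'(x+h) - u'(x-h) - 2h\,u''(x)$, whose absolute value is bounded by $Mh^2$ by Lipschitzness of $u''$. This yields $|\epsilon_c-\epsilon_b| \leq M(c^3-b^3)/3 \leq Mc^2(c-b)$; combining $c-b = a^2/(c+b) \leq a^2/(2b)$ with $c^2 = a^2+b^2 \leq 2b^2$ (valid for $k \geq 1$) gives $|\epsilon_c-\epsilon_b| \leq Ma^2 b = 8M\sqrt{k}\,\tau^{3/2}$, which is of the desired form $(k\tau)\cdot O(M\sqrt{\tau})$. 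Since $|\epsilon_a| \leq Ma^3/3$ is of strictly smaller order, the first term is controlled. For the cross term I would write $L_b L_a u = a^2 L_b u'' + L_b\epsilon_a$; Lipschitzness of $u''$ gives $|L_b u''| \leq 2Mb$, so $a^2|L_b u''| = O(Ma^2 b)$, while $|L_b\epsilon_a| \leq 4\norma{\epsilon_a}_{\C0} = O(Ma^3)$. Both are $\leq (k\tau)\cdot O(M\sqrt{\tau})$ for $k \geq 1$, while $k=0$ makes~\Ref{eq:k} trivial. Hence~\Ref{eq:k} holds with $\omega(\tau) = CM\sqrt{\tau}$, and $\int_0^\delta \omega(\tau)/\tau\,d\tau = 2CM\sqrt{\delta} < +\infty$.

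The principal obstacle is precisely the first bracket: the naive bound $|\epsilon_a|+|\epsilon_b|+|\epsilon_c| = O(M((k+1)\tau)^{3/2})$ is too weak by a factor $\sqrt{k}$ once divided by $k\tau$. The essential step is to pair $\epsilon_b$ with $\epsilon_c$, apply the derivative bound $|\epsilon'_h| \leq Mh^2$, and exploit the algebraic identity $c-b = a^2/(c+b)$ to extract an extra factor $a^2/b$, producing the correct $O(a^2 b)$ scaling, i.e.\ $O(\sqrt{k}\,\tau^{3/2})$ instead of $O((k\tau)^{3/2})$.
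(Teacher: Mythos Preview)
Your proof is correct and arrives at the same $\omega(\tau)=O(M\sqrt\tau)$. Both arguments start from the identical expansion $F(k\tau)F(\tau)-F((k+1)\tau)=\tfrac14[L_a+L_b-L_c]+\tfrac1{16}L_bL_a$ (with $a=2\sqrt\tau$, $b=2\sqrt{k\tau}$, $c=2\sqrt{(k+1)\tau}$), but the paper estimates the bracket by a different device. It introduces the \emph{normalized} second-difference operator $D^2_\sigma u = L_\sigma u/\sigma^2$, for which the integral representation $D^2_\sigma u(x)=\int_0^1\!\int_{-1}^1\xi\,u''(x+\eta\xi\sigma)\,d\eta\,d\xi$ immediately yields the single Lipschitz bound $\norma{D^2_{\sigma_1}u-D^2_{\sigma_2}u}_{\C0}\leq\Lip(u'')\,|\sigma_1-\sigma_2|$. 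Writing the bracket as $\tau(D^2_a-D^2_c)u+k\tau(D^2_b-D^2_c)u$ and applying this bound twice gives the estimate in one line; the cross term is then dispatched via $\norma{D^2_\sigma v}_{\C0}\leq 4\norma{v}_{\C0}/\sigma^2$. Your route---splitting $L_hu=h^2u''+\epsilon_h$, invoking the Pythagorean identity $a^2+b^2=c^2$ explicitly to kill the leading part, then pairing $\epsilon_c-\epsilon_b$ and bounding via $|\epsilon_h'|\leq Mh^2$---is a bit more hands-on but makes the cancellation mechanism very transparent and produces a comparable constant. Your explicit verification that $F(t)\mathcal{D}\subseteq\mathcal{D}$ (hence condition~\textbf{(D)}) is a useful addition that the paper leaves implicit.
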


\begin{proof}
  Note that $u \mapsto F(t)u$ is linear.  For $u \in \mathcal{D}$,
  introduce
  \begin{displaymath}
    (D^2_\sigma u) \, (x)
    = 
    \frac{u (x-\sigma) - 2 u(x) + u (x+\sigma)}{\sigma^2} \,.
  \end{displaymath}
  $D^2_\sigma$ is a linear operator and $F(t) u = u + t
  D^2_{2\sqrt{t}}u$. Moreover,
  \begin{displaymath}
    (D^2_\sigma u) \, (x)
    =
    \int_0^1 \left(\int_{-1}^{+1} \xi \, 
      u''( x+ \eta\xi\sigma) \, d\eta \right)\, d\xi
  \end{displaymath}
  so that
  \begin{eqnarray*}
    \norma{D^2_{\sigma_1} u - D^2_{\sigma_2} u}_{\C0}
    & \leq &
    \Lip(u'') \, \modulo{\sigma_1 - \sigma_2}
    \\
    \norma{D^2_\sigma u}_{\C0}
    & \leq &
    \min 
    \left\{
      \frac{4}{\sigma^2}\norma{u}_{\C0},\norma{u''}_{\C0},
      \frac{1}{3} \, \Lip(u'') \, \sigma
    \right\} \,.
  \end{eqnarray*}
  $F$ is Lipschitz in $t$, indeed if $t_1 < t_2$, then
  \begin{eqnarray*}
    \norma{F(t_2)u - F(t_1)u}_{\C0}
    & = &
    \norma{t_2 \, D^2_{2\sqrt{t_2}} u - t_1 \, D^2_{2\sqrt{t_1}}u}_{\C0}
    \\
    & \leq &
    \modulo{t_2 - t_1} \, \norma{u''}_{\C0}
    +
    2 \, \Lip(u'') \, t_1 \, \modulo{\sqrt{t_2} - \sqrt{t_1}} 
    \\
    & \leq &
    \left( \norma{u''}_{\C0} + \Lip(u'') \sqrt{\delta} \right)
    \modulo{t_2-t_1}\,.
  \end{eqnarray*}

  Consider now~\ref{it:first}.~and write $s=kt$ for $k \in \naturali$.
  Then
  \begin{eqnarray*}
    & &
    \norma{F(kt)F(t) u - F(kt+t)u}_{\C0} =
    \\
    & = &
    \norma{
      F(kt) \left(u + t D^2_{2\sqrt{t}} u \right) -
      u -
      (t+kt) D^2_{2\sqrt{t+kt}}u
    }_{\C0}
    \\
    & = &
    \norma{
      t D^2_{2\sqrt{t}} u + 
      kt D^2_{2\sqrt{kt}} u + 
      kt t D^2_{2\sqrt{kt}} D^2_{2\sqrt{t}} u -
      (t+kt) D^2_{2\sqrt{t+kt}} u
    }_{\C0}
    \\
    & \leq &
    t \norma{D^2_{2\sqrt{t}} u - D^2_{2\sqrt{t+kt}} u}_{\C0} +
    kt \norma{D^2_{2\sqrt{kt}} u - D^2_{2\sqrt{t+kt}} u}_{\C0}\\
    & & +
    kt^2 \norma{D^2_{2\sqrt{kt}} D^2_{2\sqrt{t}} u}_{\C0}
    \\
    & \leq &
    2\Lip(u'') \left(
      t \modulo{\sqrt{t+kt} - \sqrt{t}} +
      kt \modulo{\sqrt{t+kt} - \sqrt{kt}}
    \right)
    +
    t \norma{D^2_{2\sqrt{t}}u}_{\C0}
    \\
    & \leq &
    2\, \Lip(u'') \, \left( t\sqrt{kt} + kt \sqrt{t} \right) +
    \frac{2}{3} \, \Lip(u'') \, t \sqrt{t}
    \\
    & \leq &
    \frac{14}{3} \, M \, kt \sqrt{t}
  \end{eqnarray*}
  hence, \ref{it:first}.~in Theorem~\ref{thm:main} is satisfied with
  $\omega(t) = \frac{14}{3} \, M \, \sqrt{t}$.

  Finally, the equality
  \begin{displaymath}
    \left( F(t) u \right) (x)
    = 
    \frac{1}{4} \, u\left(x-2\sqrt{t} \right)
    +
    \frac{1}{2} \, u(x)
    + 
    \frac{1}{4} \, u\left(x+2\sqrt{t} \right)
  \end{displaymath}
  implies that $\left( F(t) u \right) (\reali) \subseteq {\mathop{\rm
      co}} \left( u(\reali)\right)$. Hence, for all $u,w \in X$,
  \begin{eqnarray*}
    d \left( F(t)u, F(t)w \right)
    =
    \norma{ F(t) (w-u)}_{\C0}
    \leq
    \norma{w-u}_{\C0}
    = d(u,w)
  \end{eqnarray*}
  showing that $F$ is non expansive in $u$ and, hence,
  that~\ref{it:second}.~holds.
\end{proof}

\Section{Technical Details}
\label{sec:Tech}

Occasionally, for typographical reasons, we write $d\Bigl( \!\!
\begin{array}{c} u,\\w \end{array} \!\! \Bigr)$ for $d(u,w)$.

In this section, we use the following definition
\begin{equation}
  \label{def:D2}
  \!\!\!
  \mathcal{D}^2_{t_o}
  =
  \left\{
    u\in\mathcal{D} \colon \!
    \begin{array}{l}
      F^{\epsilon_2} (t_2,t_o+t_1) \circ 
      F^{\epsilon_1} (t_1,t_o)u 
      \\
      \mbox{is in } \mathcal{D}
      \mbox{ for all }
      \epsilon_1, \epsilon_2 \in
      \left]0,\delta \right]
      \mbox{ and all} 
      \\ 
      t_1, t_2 \geq 0 \mbox{ such that } t_o+t_1+t_2 \in I
    \end{array}\!
  \right\}
\end{equation}
Observe that one obviously has
$\mathcal{D}^3_{t_o}\subseteq\mathcal{D}^2_{t_o}$.

\begin{proofof}{Lemma~\ref{lem:Time}}
  $F^\epsilon(t,t_o)u$ is continuous in $t_o$ and Lipschitz in $u$
  because it is the composition~\Ref{eq:polygonal} of functions with
  the same properties, see Definition~\ref{def:local}.  Fix now
  positive $s,t$ with $t_o\leq s \leq t \leq T$. Then note that if
  $h\epsilon \leq s \leq t \leq (h+1)\epsilon$ for $h \in \naturali$,
  then by Definition~\ref{def:local},
  \begin{displaymath}
    d\left( F^\epsilon (s,t_o)u, F^\epsilon(t,t_o)u\right)
    \leq
    \Lip(F)  \, \modulo{s-t} \,.
  \end{displaymath}
  Let $k = [t/\epsilon]$ and $h = [s/\epsilon]+1$. The case $h > k$ is
  recovered by the previous computation.  If $h \leq k$, then
  by~\Ref{eq:polygonal}
  \begin{eqnarray*}
    d \left( F^\epsilon(s,t_o) u, F^\epsilon (t,t_o) u \right)
    & \leq &
    d \left( 
      F^\epsilon(s, t_o) u , F^\epsilon(h\epsilon, t_o) u
    \right)
    \\
    & &
    +
    \sum_{j=h}^{k-1}
    d \left( 
      F^\epsilon(j\epsilon, t_o) u, 
      F^\epsilon\left( (j+1)\epsilon, t_o \right) u 
    \right)
    \\
    & &
    +
    d \left(
      F^\epsilon (k\epsilon,t_o) u, F^\epsilon (t,t_o)u
    \right)
    \\
    & \leq &
    \Lip(F) \, 
    \left(
      (h\epsilon - s) + (k-h)\epsilon + (t-k\epsilon)
    \right)
    \\
    & = &
    \Lip(F) \, (t-s) \,.
  \end{eqnarray*}
\end{proofof}

\begin{lemma}
  \label{lem:kappa}
  Let $F$ be a local flow satisfying~1.~in
  Theorem~\ref{thm:main}. Then, for all $h,k,t_o,u$ and $\epsilon$
  satisfying $t_o \in I$, $h,k \in \naturali$, $k\epsilon \in \left]0,
    \delta \right]$, $t_o+(h+k)\epsilon \in I$, $u \in
  \mathcal{D}^2_{t_o}$, the following holds:
  \begin{displaymath}
    d \left( F(k\epsilon,\bar t)v, F^\epsilon (k\epsilon,\bar t)v \right)
    \leq
    k^2 \epsilon \, \omega(\epsilon) \,.
  \end{displaymath}
  where $v = F^\epsilon (h\epsilon,t_o)u$ and $\bar t =
  t_o+h\epsilon$.
\end{lemma}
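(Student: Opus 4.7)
The plan is a telescoping argument along the Euler trajectory. Setting $w_j = F^\epsilon(j\epsilon, \bar t)\, v$ for $j = 0, 1, \ldots, k$, one has $w_0 = v$, $w_k = F^\epsilon(k\epsilon, \bar t)\, v$, and $w_{j+1} = F(\epsilon, \bar t + j\epsilon)\, w_j$ by the definition~\Ref{eq:polygonal} of the Euler polygonal at a grid point. I would then introduce the intermediate points
\[
A_j = F\bigl((k-j)\epsilon,\ \bar t + j\epsilon\bigr)\, w_j, \qquad j = 0, 1, \ldots, k,
\]
so that $A_0 = F(k\epsilon, \bar t)\, v$ is the left-hand side of the desired estimate while $A_k = F^\epsilon(k\epsilon, \bar t)\, v$ is the right-hand side (using $F(0, \cdot) = \Id$).

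The next step is to estimate each difference $d(A_j, A_{j+1})$ directly from hypothesis~\ref{it:first}.~of Theorem~\ref{thm:main}. Rewriting
\[
A_{j+1} = F\bigl((k-j-1)\epsilon,\ (\bar t + j\epsilon) + \epsilon\bigr) \circ F(\epsilon, \bar t + j\epsilon)\, w_j, \qquad A_j = F\bigl((k-j)\epsilon,\ \bar t + j\epsilon\bigr)\, w_j,
\]
inequality~\Ref{eq:k} applied with $\tau = \epsilon$, base time $\bar t + j\epsilon$, input $w_j$, and iteration index $k - j - 1$ yields $d(A_j, A_{j+1}) \leq (k-j-1)\, \epsilon\, \omega(\epsilon)$. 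Telescoping then gives
\[
d\bigl(F(k\epsilon, \bar t)\, v,\ F^\epsilon(k\epsilon, \bar t)\, v\bigr)
\ \leq\ \sum_{j=0}^{k-1} (k-j-1)\, \epsilon\, \omega(\epsilon)
\ =\ \tfrac{k(k-1)}{2}\, \epsilon\, \omega(\epsilon)
\ \leq\ k^2\, \epsilon\, \omega(\epsilon),
\]
which is the target bound.

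The only point requiring care, and the one place where the hypothesis $u \in \mathcal{D}^2_{t_o}$ is used, is to check that every composition in the chain is actually defined. Since
\[
w_j = F^\epsilon(j\epsilon, t_o + h\epsilon) \circ F^\epsilon(h\epsilon, t_o)\, u
\]
is a composition of two Euler polygonals applied to $u$, the definition~\Ref{def:D2} of $\mathcal{D}^2_{t_o}$ immediately gives $w_j \in \mathcal{D}$ for every $j = 0, \ldots, k$; a fortiori $w_{j+1} = F(\epsilon, \bar t + j\epsilon)\, w_j \in \mathcal{D}$, so the two-fold composition of $F$ appearing on the left-hand side of~\Ref{eq:k} is admissible at every step. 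Thus no real obstacle arises: once the telescoping is set up along the Euler trajectory, hypothesis~\Ref{eq:k} does all the work, and the proof is essentially a matter of bookkeeping.
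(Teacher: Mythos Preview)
Your proof is correct and follows essentially the same telescoping argument as the paper. The paper writes the intermediate terms in the form $F\bigl((j+1)\epsilon,\,(k-j-1)\epsilon+\bar t\bigr)\circ F^\epsilon\bigl((k-j-1)\epsilon,\bar t\bigr)v$ and sums from $j=1$ to $k-1$, which under the reindexing $i=k-j-1$ is exactly your sequence $A_i$; the estimate of each summand by~\Ref{eq:k} and the appeal to $\mathcal{D}^2_{t_o}$ for well-definedness are the same in both.
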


\begin{proof}
  The Definition~\Ref{def:D2} of $\mathcal{D}^2_{t_o}$ implies that we
  can apply the triangle inequality and hypothesis~1.~in
  Theorem~\ref{thm:main} to obtain
  \begin{eqnarray*}
    & &
    d \left( 
      F(k\epsilon,\bar t)v, 
      F^\epsilon (k\epsilon,\bar t)v
    \right)
    \leq
    \\
    & \leq &
    \sum_{j=1}^{k-1} 
    d \left(
      \begin{array}{l}
        F \left( 
          (j+1)\epsilon, \left(k-(j+1)\right)\epsilon+\bar t
        \right)
        \circ
        \\
        \qquad\qquad
        \circ
        F^\epsilon \left( \left(k-(j+1)\right) \epsilon, \bar t \right) v
        ,
        \\
        F \left( j\epsilon, (k-j)\epsilon+\bar t\right)
        \circ
        F \left(\epsilon,\left(k-(j+1)\right)\epsilon+\bar t\right)
        \circ
        \\
        \qquad\qquad
        \circ
        F^\epsilon \left(\left(k-(j+1)\right) \epsilon, \bar t\right)v
      \end{array}
    \right)
    \\
    & \leq &
    \sum_{j=1}^{k-1} j \, \epsilon \, \omega(\epsilon)
    \\
    & \leq &
    k^2 \, \epsilon \, \omega(\epsilon)
  \end{eqnarray*}
  completing the proof.
\end{proof}

\begin{lemma}
  \label{lem:hk}
  Let the local flow $F$ satisfy the assumptions of
  Theorem~\ref{thm:main}. Then, for all $h,k,t_o,u$ and $\epsilon$
  satisfying $t_o \in I$, $h,k \in \naturali$, $k\epsilon \in \left]0,
    \delta \right]$, $t_o+hk\epsilon \in I$ and $u \in
  \mathcal{D}^2_{t_o}$, the following holds:
  \begin{displaymath}
    d\left(
      F^{k\epsilon} (h k \epsilon, t_o)u, 
      F^\epsilon(h k \epsilon, t_o ) u
    \right)
    \leq
    Lh \, k^2 \, \epsilon \, \omega(\epsilon) \,.
  \end{displaymath}
\end{lemma}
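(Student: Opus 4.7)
The plan is to interpolate between $F^{k\epsilon}(hk\epsilon,t_o)u$ and $F^\epsilon(hk\epsilon,t_o)u$ by a telescoping hybrid argument, converting one ``big'' step of size $k\epsilon$ into a small $\epsilon$-polygonal at a time. Concretely, for $j=0,1,\ldots,h$ I would set
\[
  w_j
  =
  F^{k\epsilon}\!\bigl((h-j)k\epsilon,\, t_o+jk\epsilon\bigr)
  \circ
  F^\epsilon(jk\epsilon, t_o)\, u ,
\]
so that $w_0 = F^{k\epsilon}(hk\epsilon,t_o)u$ (all big steps) and $w_h = F^\epsilon(hk\epsilon,t_o)u$ (all small steps), and then invoke the triangle inequality $d(w_0,w_h) \leq \sum_{j=0}^{h-1} d(w_j,w_{j+1})$.

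To bound a single term in this sum, I would peel off the first $F(k\epsilon,t_o+jk\epsilon)$ of the outer polygonal in $w_j$, and dually merge a block $F^\epsilon(k\epsilon,t_o+jk\epsilon)$ into the inner polygonal of $w_{j+1}$, so that both hybrids have a common outermost operator $F^{k\epsilon}((h-j-1)k\epsilon,\,t_o+(j+1)k\epsilon)$. Since $k\epsilon\in(0,\delta]$, assumption~\Ref{eq:Stability} says this common outer Euler $k\epsilon$-polygonal is $L$-Lipschitz, and hence
\[
  d(w_j,w_{j+1})
  \leq
  L \cdot d\Bigl( F(k\epsilon,t_o+jk\epsilon)\, v_j ,\
     F^\epsilon(k\epsilon,t_o+jk\epsilon)\, v_j \Bigr),
  \qquad
  v_j = F^\epsilon(jk\epsilon,t_o)\, u .
\]
Now $v_j$ is of exactly the form required by Lemma~\ref{lem:kappa} (with its ``$h$'' equal to $jk$ and its step size equal to $\epsilon$), so that lemma bounds the remaining inner distance by $k^2\epsilon\,\omega(\epsilon)$. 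Summing over $j=0,\ldots,h-1$ then yields the stated estimate $Lh k^2 \epsilon\,\omega(\epsilon)$.

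The only delicate point, and the one I would focus on to keep the proof rigorous, is well-definedness: every composition that appears in a $w_j$ must live in $\mathcal{D}$ for the various operators to be applicable. This is precisely where the hypothesis $u\in\mathcal{D}^2_{t_o}$ enters, since each $w_j$ is (after the peeling) a composition of at most two Euler polygonals starting at $u$ (one of step $\epsilon$ on $[t_o,t_o+jk\epsilon]$, followed by one of step $k\epsilon$ on $[t_o+jk\epsilon,t_o+hk\epsilon]$, with a possible single intermediate $F(k\epsilon,\cdot)$ or $F^\epsilon(k\epsilon,\cdot)$ block that itself combines with one of the two), so definition~\Ref{def:D2} guarantees containment in $\mathcal{D}$. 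Once this bookkeeping is in place, the bound is just a direct combination of Lemma~\ref{lem:kappa} with~\Ref{eq:Stability}, and no further estimates are needed.
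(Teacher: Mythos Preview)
Your proposal is correct and follows essentially the same approach as the paper: the same telescoping hybrid (the paper indexes it in the reverse direction, with the $F^\epsilon$ part at the end rather than at the start, but this is cosmetic), the same peeling of one big step so that the common outer $F^{k\epsilon}$-polygonal can be removed via~\Ref{eq:Stability}, and the same appeal to Lemma~\ref{lem:kappa} for the remaining inner term. Your remark on well-definedness via $\mathcal{D}^2_{t_o}$ is also exactly how the paper handles it.
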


\begin{proof}
  The Definition~\Ref{def:D2} of $\mathcal{D}^2_{t_o}$ implies that we
  can apply the triangle inequality. Then, by the assumptions in
  Theorem~\ref{thm:main}, applying Lemma~\ref{lem:kappa} and with
  computations similar to the ones in~\cite{KuhneWacker1}, we have
  \begin{eqnarray*}
    & &
    d\left(
      F^{k\epsilon} (h k \epsilon, t_o)u, 
      F^\epsilon(h k \epsilon, t_o ) u
    \right)
    \leq
    \\
    & \leq &
    \sum_{j=0}^{h-1}
    d\left(
      \begin{array}{l}
        F^{k\epsilon} 
        \left( (j+1)k \epsilon, \left(h-(j+1)\right)k \epsilon + t_o \right)
        \circ
        \\
        \qquad
        \circ
        F^\epsilon \left( \left(h-(j+1) \right) k\epsilon,t_o\right) u,
        \\
        F^{k\epsilon} \left(jk\epsilon, (h-j) k \epsilon + t_o\right)
        \circ
        F^\epsilon \left( (h-j) k \epsilon, t_o\right)u
      \end{array}
    \right)
    \\
    & \leq &
    \sum_{j=0}^{h-1}
    d\left(
      \begin{array}{l}
        F^{k\epsilon} 
        \left( j k \epsilon, (h-j)k \epsilon + t_o \right)
        \circ
        F^{k\epsilon} 
        \left( k \epsilon, \left(h-(j+1)\right)k\epsilon + t_o \right)
        \\
        \qquad
        \circ
        F^\epsilon \left( \left(h-(j+1) \right) k\epsilon,t_o\right) u,
        \\
        F^{k\epsilon} \left(jk\epsilon, (h-j) k \epsilon + t_o\right)
        \circ
        F^\epsilon 
        \left( 
          k \epsilon, \left(h-(j+1)\right) k \epsilon + t_o
        \right)
        \\
        \qquad
        \circ
        F^\epsilon \left( \left(h-(j+1)\right)k \epsilon, t_o\right) u
      \end{array}
    \right)
    \\
    & = &
    L
    \sum_{j=0}^{h-1}
    d\left(
      \begin{array}{l}
        F^{k\epsilon} 
        \left( k \epsilon, \left(h-(j+1)\right)k\epsilon + t_o \right)
        \\
        \qquad
        \circ
        F^\epsilon \left( \left(h-(j+1) \right) k\epsilon,t_o\right) u,
        \\
        F^\epsilon 
        \left( 
          k \epsilon, \left(h-(j+1)\right) k \epsilon + t_o
        \right)
        \\
        \qquad
        \circ
        F^\epsilon \left( \left(h-(j+1)\right)k \epsilon, t_o\right) u
      \end{array}
    \right)
    \\
    & = &
    L
    \sum_{j=0}^{h-1}
    d\left(
      \begin{array}{l}
        F
        \left( k \epsilon, \left(h-(j+1)\right)k\epsilon + t_o \right)
        \\
        \qquad
        \circ
        F^\epsilon \left( \left(h-(j+1) \right) k\epsilon,t_o\right) u,
        \\
        F^\epsilon 
        \left( 
          k \epsilon, \left(h-(j+1)\right) k \epsilon + t_o
        \right)
        \\
        \qquad
        \circ
        F^\epsilon \left( \left(h-(j+1)\right)k \epsilon, t_o\right) u
      \end{array}
    \right)
    \\
    & \leq &
    L \sum_{j=0}^{h-1} k^2 \epsilon \, \omega(\epsilon)
    \\
    & = &
    L \, h k^2 \epsilon \, \omega(\epsilon)
  \end{eqnarray*}
  completing the proof.
\end{proof}

\begin{lemma}
  \label{lem:integral}
  Let the local flow $F$ satisfy the assumptions of
  Theorem~\ref{thm:main}. Then, for all $m,n,t,t_o$ and $u$ satisfying
  $t>0$, $t_o, t_o+t \in I$, $m,n \in \naturali$, $n \geq m$, $t
  2^{-m} \in \left]0, \delta \right]$ and $u \in \mathcal{D}^2_{t_o}$,
  the following holds:
  \begin{displaymath}
    d\left(F^{t2^{-m}} (t,t_o) u, F^{t2^{-n}} (t,t_o)u \right)
    \leq 
    \frac{2L}{\ln 2} \, t \,
    \int_{t2^{-n}}^{t2^{-m}} \frac{\omega(\xi)}{\xi} \, d\xi \,.
  \end{displaymath}
\end{lemma}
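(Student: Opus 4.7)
The plan is to reduce the statement to a telescoping argument in which we halve the Euler step size one dyadic level at a time, then compare the resulting geometric-like sum to the integral on the right-hand side using monotonicity of $\omega$.

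First I would insert intermediate polygonals and write
\begin{displaymath}
  d\left(F^{t2^{-m}}(t,t_o)u,\, F^{t2^{-n}}(t,t_o)u\right)
  \leq
  \sum_{j=m}^{n-1}
  d\left(F^{t2^{-j}}(t,t_o)u,\, F^{t2^{-(j+1)}}(t,t_o)u\right).
\end{displaymath}
Each individual term compares two polygonals whose step sizes differ by a factor of $2$, and this is exactly the content of Lemma~\ref{lem:hk}. Concretely, I would apply that lemma with $k=2$, $\epsilon = t 2^{-(j+1)}$ and $h = 2^{j}$, so that $hk\epsilon = t$ and $k\epsilon = t 2^{-j} \in \left]0,\delta\right]$ (since $t2^{-m} \leq \delta$ and $j \geq m$). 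The hypothesis $u \in \mathcal{D}^2_{t_o}$ carries over from the lemma's assumption, so each term is bounded by
\begin{displaymath}
  L \cdot h \cdot k^2 \cdot \epsilon \cdot \omega(\epsilon)
  =
  L \cdot 2^j \cdot 4 \cdot t2^{-(j+1)} \cdot \omega\bigl(t 2^{-(j+1)}\bigr)
  =
  2Lt \, \omega\bigl(t 2^{-(j+1)}\bigr).
\end{displaymath}
Summing over $j = m, \ldots, n-1$ yields the upper bound $2Lt \sum_{j=m+1}^{n} \omega\bigl(t 2^{-j}\bigr)$.

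The remaining step is to bound this sum by an integral. The change of variables $\xi = t 2^{-s}$ gives $d\xi / \xi = -\ln 2\, ds$, hence
\begin{displaymath}
  \int_{t 2^{-n}}^{t 2^{-m}} \frac{\omega(\xi)}{\xi}\, d\xi
  =
  \ln 2 \int_{m}^{n} \omega\bigl(t 2^{-s}\bigr)\, ds.
\end{displaymath}
Since $\omega$ is non-decreasing, on each interval $s \in [j-1,j]$ we have $\omega(t2^{-s}) \geq \omega(t2^{-j})$, so $\omega(t2^{-j}) \leq \int_{j-1}^{j} \omega(t2^{-s})\, ds$. Summing from $j=m+1$ to $j=n$ and combining with the previous estimate gives the claimed bound with constant $2L/\ln 2$.

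The only subtle point is bookkeeping: making sure that at each dyadic level the polygonal $F^{t2^{-j}}(t,t_o)u$ is actually defined, which requires that all the intermediate compositions stay in $\mathcal{D}$. This is precisely why the hypothesis $u \in \mathcal{D}^2_{t_o}$ is imposed, and the definition~\Ref{def:D2} was tailored to cover exactly the compositions of two polygonals that appear when Lemma~\ref{lem:hk} is invoked. Once that is observed, the argument is essentially the standard comparison between a dyadic sum and a logarithmic integral; no further estimate is needed.
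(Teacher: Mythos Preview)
Your argument is correct and follows essentially the same route as the paper: telescoping along dyadic step sizes, applying Lemma~\ref{lem:hk} with $k=2$, $h=2^{j}$, $\epsilon=t2^{-(j+1)}$ at each level, and then bounding the resulting sum by the integral via monotonicity of $\omega$ and the substitution $\xi=t2^{-s}$. The only cosmetic difference is that you reindex the sum and spell out the change of variables explicitly, whereas the paper leaves that step implicit.
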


\begin{proof}
  Applying Lemma~\ref{lem:hk} with $\epsilon = t 2^{-(j+1)}$, $k=2$,
  $h=2^j$ and $t=h k \epsilon$,
  \begin{eqnarray*}
    & &
    d\left( F^{t2^{-m}} (t,t_o) u, F^{t2^{-n}} (t,t_o)u \right)
    \leq
    \\
    & \leq &
    \sum_{j=m}^{n-1}
    d\left(
      F^{2t2^{-(j+1)}} (t, t_o) u, F^{t2^{-(j+1)}} (t, t_o) u
    \right)
    \\
    & \leq &
    L \sum_{j=m}^{n-1} 2 t \, \omega(t2^{-(j+1)})
    \\
    & \leq &
    2 L t \sum_{j=m}^{n-1} \int_j^{j+1} \omega(t2^{-(j+1)}) \, ds
    \\
    & \leq &
    \frac{2Lt}{\ln 2} 
    \int_{t2^{-n}}^{t2^{-m}} \frac{\omega(\xi)}{\xi} \, d\xi \,.
  \end{eqnarray*}
\end{proof}

\begin{corollary}
  \label{cor:Pi}
  Let $F$ be a local flow satisfying the assumptions of
  Theorem~\ref{thm:main}. Then, for any $t>0$, $t_o,t_o+t\in I$,
  $u\in\mathcal{D}^2_{t_o}$ and $T2^{-m} \in \left]0, \delta \right]$,
  the sequence of functions
  \begin{displaymath}
    (t,t_o,u) \mapsto\left\{
      \begin{array}{lll}
        F^{t2^{-m}} (t,t_o) u &\mbox{ for }&t>0\\
        u&\mbox{ for }&t=0
      \end{array}\right.
  \end{displaymath}
  converges uniformly to a continuous map $(t,t_o,u) \to P (t,t_o)u$.
\end{corollary}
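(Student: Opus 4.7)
\begin{proofof}{Corollary~\ref{cor:Pi}}
The plan is to use Lemma~\ref{lem:integral} to establish that the sequence is uniformly Cauchy on its domain, extract a limit by completeness of $X$, and then transfer continuity from the approximants to $P$.

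First, fix $m_0$ such that $T 2^{-m_0}\leq\delta$. For any $m,n\in\naturali$ with $n\geq m\geq m_0$, and any admissible $(t,t_o,u)$ with $u\in\mathcal{D}^2_{t_o}$, Lemma~\ref{lem:integral} together with the monotonicity in $t$ of the prefactor and the integration interval yields
\begin{displaymath}
  d\!\left(F^{t 2^{-m}}(t,t_o)u,\,F^{t 2^{-n}}(t,t_o)u\right)
  \leq
  \frac{2LT}{\ln 2}\,
  \int_0^{T 2^{-m}}\!\frac{\omega(\xi)}{\xi}\,d\xi\,.
\end{displaymath}
Since $\int_0^\delta \omega(\xi)/\xi\,d\xi<+\infty$, the right-hand side tends to $0$ as $m\to +\infty$ independently of $t\in[0,T]$, $t_o\in I$ and $u\in\mathcal{D}^2_{t_o}$. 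Completeness of $X$ therefore provides a limit $P(t,t_o)u$, and the convergence is uniform in these variables. At $t=0$, each term of the sequence equals $u$, so we set $P(0,t_o)u=u$ consistently.

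Next I show that every term $G_m(t,t_o,u):=F^{t 2^{-m}}(t,t_o)u$ is jointly continuous for $t>0$. Since $[t/(t2^{-m})]=2^m$, formula~\Ref{eq:polygonal} reduces to the finite composition
\begin{displaymath}
  G_m(t,t_o,u)
  =
  \comp_{h=0}^{2^m-1} F\!\left(t 2^{-m},\,t_o+h\,t 2^{-m}\right)u \,,
\end{displaymath}
which, by Definition~\ref{def:local}, is Lipschitz in its first and third arguments and continuous in the second; joint continuity on $\{t>0\}$ follows. A uniform limit of continuous functions is continuous, so $P$ is jointly continuous on $\{t>0\}$.

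It remains to verify continuity at $t=0$. By Lemma~\ref{lem:Time}, $d(G_m(t,t_o,u),u)\leq \Lip(F)\,t$ uniformly in $m$, and by hypothesis~\ref{it:second} of Theorem~\ref{thm:main}, $d(G_m(t,t_o,u),G_m(t,t_o,u'))\leq L\,d(u,u')$ uniformly in $m$; both estimates pass to the uniform limit, giving
\begin{displaymath}
  d\!\left(P(t,t_o)u,u^0\right)
  \leq
  L\,d(u,u^0) + \Lip(F)\,t
\end{displaymath}
as $(t,u)\to(0,u^0)$, which combined with continuity in $t_o$ (again inherited from the $G_m$) yields continuity of $P$ at points with $t=0$. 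The main delicate point is the joint continuity of $G_m$ when the step $\epsilon=t 2^{-m}$ itself varies with $t$; this is handled by the observation that $G_m$ is a \emph{fixed finite} composition of maps whose arguments depend continuously on $(t,t_o,u)$.
\end{proofof}
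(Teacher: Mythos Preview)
Your proof is correct and follows essentially the same approach as the paper's: both use Lemma~\ref{lem:integral} to obtain the uniform Cauchy property and observe that each approximant is a fixed finite composition of $2^m$ continuous factors of $F$. You simply provide more detail, in particular making the Cauchy estimate explicit and treating continuity at $t=0$ separately via Lemma~\ref{lem:Time}, points the paper leaves implicit.
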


\begin{proof}
  Observe that the maps in the sequence above are continuous, since
  $F^{t2^{-m}}(t,t_o)u = \comp_{j=0}^{2^m-1} F \left( t2^{-m},
    t_o+j2^{-m}t\right)u$ is the composition of $2^m$ continuous
  maps. The uniform convergence follows from Lemma~\ref{lem:integral}.
\end{proof}

\begin{remark}
  \label{rem:basta2}
  To prove the existence of a limit $P$, i.e.~Lemma~\ref{lem:integral}
  and Corollary~\ref{cor:Pi}, we need Lemma~\ref{lem:hk} and hence
  Lemma~\ref{lem:kappa} only for $k=2$. On the other hand, to prove
  these lemmas for $k=2$, it is enough to assume 1.~of
  Theorem~\ref{thm:main} only for $k=1$. Assumption 1. in
  Theorem~\ref{thm:main} for all $k\in\naturali$ is needed to show
  that all the sequence of Euler approximates $F^\epsilon$ converges
  (Proposition~\ref{prop:45}) and, hence, that the limit is indeed a
  process (Lemma~\ref{lemma4.7}). The necessity of hypothesis~1.~for
  all $k\in\naturali$ is shown in Paragraph~\ref{sub:example}
\end{remark}

\begin{proposition}
  \label{prop:45}
  Let $F$ be a local flow satisfying the assumptions of
  Theorem~\ref{thm:main}. Then, for all $t_o,t,u$ with $t\geq 0$,
  $t_o,t_o+t \in I$ and $u \in \mathcal{D}^2_{t_o}$, the following
  limit holds:
  \begin{displaymath}
    \lim_{\epsilon \to 0+} F^\epsilon(t,t_o)u = P(t,t_o)u \,,
  \end{displaymath}
  with $P$ defined as in Corollary~\ref{cor:Pi}.
\end{proposition}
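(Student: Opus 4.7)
The plan is to bootstrap from the dyadic convergence in Corollary~\ref{cor:Pi} to arbitrary $\epsilon\to 0^+$; the only new difficulty is that a generic $\epsilon$ need not divide $t$. Given small $\epsilon$, I set $\kappa=[t/\epsilon]$ and $s=\kappa\epsilon$, so that $\epsilon$ divides $s$ cleanly and $0\le t-s<\epsilon$, and bound
\begin{displaymath}
d\bigl(F^\epsilon(t,t_o)u,P(t,t_o)u\bigr)
\le d\bigl(F^\epsilon(t,t_o)u,F^\epsilon(s,t_o)u\bigr)
+ d\bigl(F^\epsilon(s,t_o)u,P(s,t_o)u\bigr)
+ d\bigl(P(s,t_o)u,P(t,t_o)u\bigr).
\end{displaymath}
The first summand is at most $\Lip(F)\,\epsilon$ by Lemma~\ref{lem:Time}, and the third vanishes because $s\to t$ and $P$ is continuous by Corollary~\ref{cor:Pi}. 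The task thus reduces to showing that the middle term vanishes.

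Since $\epsilon$ and $\epsilon 2^{-m}$ both divide $s=\kappa\epsilon$, the dyadic telescoping in the proof of Lemma~\ref{lem:integral} applies verbatim to the pair $\bigl(F^\epsilon,F^{\epsilon 2^{-m}}\bigr)$ on $[0,s]$: iterating Lemma~\ref{lem:hk} with its multiplier equal to $2$, iteration count $h=\kappa 2^j$, and base step $\epsilon 2^{-(j+1)}$ for $j=0,\ldots,m-1$, and summing with the same change of variables as in Lemma~\ref{lem:integral}, yields
\begin{displaymath}
d\bigl(F^\epsilon(s,t_o)u,F^{\epsilon 2^{-m}}(s,t_o)u\bigr)
\le \frac{2Ls}{\ln 2}\int_{\epsilon 2^{-m}}^\epsilon \frac{\omega(\xi)}{\xi}\,d\xi.
\end{displaymath}

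To let $m\to\infty$ I must identify the limit of $F^{\epsilon 2^{-m}}(s,t_o)u$. Observing that $s 2^{-m}=\kappa\cdot(\epsilon 2^{-m})$, a \emph{single} application of Lemma~\ref{lem:hk} with multiplier $\kappa$, base step $\epsilon 2^{-m}$ and $h=2^m$ gives
\begin{displaymath}
d\bigl(F^{s 2^{-m}}(s,t_o)u,F^{\epsilon 2^{-m}}(s,t_o)u\bigr)
\le L\,\kappa^2\,\epsilon\,\omega(\epsilon 2^{-m}),
\end{displaymath}
which tends to zero as $m\to\infty$ for fixed $\epsilon$, since $\omega(\xi)\to 0$ as $\xi\to 0^+$ (a consequence of the monotonicity of $\omega$ and the integrability of $\omega(\xi)/\xi$ near zero). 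Combined with Corollary~\ref{cor:Pi} applied at time $s$, this forces $F^{\epsilon 2^{-m}}(s,t_o)u\to P(s,t_o)u$. Letting $m\to\infty$ in the telescoping bound therefore yields
\begin{displaymath}
d\bigl(F^\epsilon(s,t_o)u,P(s,t_o)u\bigr)
\le \frac{2Ls}{\ln 2}\int_0^\epsilon \frac{\omega(\xi)}{\xi}\,d\xi,
\end{displaymath}
which vanishes as $\epsilon\to 0^+$ by the integrability hypothesis on $\omega$.

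The crucial step is the comparison between $F^{\epsilon 2^{-m}}$ and $F^{s 2^{-m}}$: here the step-size ratio is the integer $\kappa=[t/\epsilon]$, which grows unboundedly as $\epsilon\to 0$. Handling this arbitrary multiplier is precisely where one needs hypothesis~\ref{it:first} of Theorem~\ref{thm:main} for all $k\in\naturali$ rather than only for $k=2$, as anticipated by Remark~\ref{rem:basta2}.
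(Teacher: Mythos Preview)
Your proof is correct and follows essentially the same approach as the paper: both reduce to the gridpoint $s=\kappa\epsilon=n_\epsilon\epsilon$ via Lemma~\ref{lem:Time} and the continuity of $P$, then split $d\bigl(F^\epsilon(s,t_o)u,P(s,t_o)u\bigr)$ into the same three pieces --- the dyadic telescoping between $F^\epsilon$ and $F^{\epsilon 2^{-m}}$ (bounded by $\frac{2Ls}{\ln 2}\int_0^\epsilon\frac{\omega(\xi)}{\xi}\,d\xi$ uniformly in $m$), the single application of Lemma~\ref{lem:hk} with multiplier $\kappa$ comparing $F^{\epsilon 2^{-m}}$ to $F^{s2^{-m}}$ (vanishing as $m\to\infty$ for fixed $\epsilon$), and Corollary~\ref{cor:Pi} at time $s$ --- before letting first $m\to\infty$ and then $\epsilon\to 0$. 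The only cosmetic difference is that the paper writes out the three-term triangle inequality~\Ref{eq:3terms} explicitly, whereas you nest the last two terms; your remark linking the need for arbitrary $k$ in hypothesis~\ref{it:first} to the unbounded ratio $\kappa$ is exactly the content of Remark~\ref{rem:basta2}.
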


\begin{proof}
  Fix $t_o,t, u$ as above. Let $n_\epsilon =[t/\epsilon]$. Observe
  first that
  \begin{eqnarray*}
    d \left( F^\epsilon (t,t_o)u, P(t,t_o)u \right)
    & \leq &
    d 
    \left( 
      F^\epsilon (t,t_o)u, F^\epsilon(n_\epsilon \epsilon,t_o)u
    \right)
    \\
    & &
    +
    d 
    \left( 
      F^\epsilon(n_\epsilon \epsilon,t_o)u, P(n_\epsilon \epsilon,t_o)u
    \right)
    \\
    & &
    +
    d \left( P(n_\epsilon \epsilon,t_o)u, P(t,t_o)u \right)
    \\
    & \leq &
    d 
    \left( 
      F^\epsilon(n_\epsilon \epsilon,t_o)u, P(n_\epsilon \epsilon,t_o)u
    \right)
    + o(1)
  \end{eqnarray*}
  as $\epsilon \to 0$, due to the Lipschitz continuity of $F$ and the
  continuity of $P$. With computations similar to the ones found
  in~\cite{KuhneWacker1}, for $l\in\naturali$ such that $T2^{-l} <
  \delta$ write:
  \begin{equation}
    \label{eq:3terms}
    \begin{array}{cl}
      &
      d 
      \left( 
        F^\epsilon(n_\epsilon \epsilon,t_o)u, P(n_\epsilon \epsilon,t_o)u
      \right)
      \leq
      \\
      \leq &
      d 
      \left( 
        F^\epsilon(n_\epsilon \epsilon,t_o)u, 
        F^{\epsilon 2^{-l}}(n_\epsilon \epsilon,t_o)u
      \right)
      \\
      &
      +
      d 
      \left( 
        F^{\epsilon 2^{-l}}(n_\epsilon \epsilon,t_o)u,
        F^{n_\epsilon \epsilon 2^{-l}}(n_\epsilon \epsilon,t_o)u,
      \right)
      \\
      &
      +
      d 
      \left( 
        F^{n_\epsilon \epsilon 2^{-l}}(n_\epsilon \epsilon,t_o)u, 
        P(n_\epsilon \epsilon,t_o)u
      \right)
    \end{array}
  \end{equation}
  Consider the three terms separately. Estimating the former one,
  apply first Lemma~\ref{lem:hk} with $h=n_\epsilon 2^{j}$, $k=2$ and
  $\epsilon$ substituted by $\epsilon 2^{-(l+1)}$ and then treat the
  summation as in the proof of Lemma~\ref{lem:integral}:
  \begin{eqnarray*}
    & &
    d 
    \left( 
      F^\epsilon(n_\epsilon \epsilon,t_o)u, 
      F^{\epsilon 2^{-l}}(n_\epsilon \epsilon,t_o)u
    \right)
    \leq
    \\
    & \leq &
    \sum_{j=0}^{l-1}
    d\left(
      F^{2\epsilon2^{-(j+1)}} (n_\epsilon \epsilon, t_o)u,
      F^{\epsilon 2^{-(j+1)}} (n_\epsilon \epsilon, t_o)u
    \right)
    \\
    & \leq &
    L \sum_{j=0}^{l-1}
    2 n_\epsilon \epsilon \omega(\epsilon2^{-(j+1)})
    \\
    & \leq &
    \frac{2Lt}{\ln 2} \, 
    \int_0^\epsilon \frac{\omega(\xi)}{\xi} \, d\xi
    \\
    & \to &
    0
    \mbox{ as } \epsilon \to 0+ \mbox{ uniformly in }l.
  \end{eqnarray*}
  Estimate the second term in~\Ref{eq:3terms} again using
  Lemma~\ref{lem:hk} with $h=2^{l}$, $k=n_\epsilon$ and $\epsilon$
  substituted by $\epsilon 2^{-l}$:
  \begin{eqnarray*}
    d 
    \left( 
      F^{\epsilon 2^{-l}}(n_\epsilon \epsilon,t_o)u,
      F^{n_\epsilon \epsilon 2^{-l}}(n_\epsilon \epsilon,t_o)u,
    \right)
    & \leq &
    L {n_\epsilon}^2 \, \epsilon \, \omega(\epsilon 2^{-l})
    \\
    & \to &
    0
    \mbox{ as } l \to +\infty \mbox{ for fixed } \epsilon \,.
  \end{eqnarray*}
  Finally, also the latter term in~\Ref{eq:3terms} vanishes as $l \to
  +\infty$ by Corollary~\ref{cor:Pi}.

  The proof is completed passing in~\Ref{eq:3terms} to the limits
  first $l \to +\infty$ and, secondly, $\epsilon \to 0$.
\end{proof}

\begin{remark}
  \label{remark4.6}
  Observe that, since $F^\epsilon(t,t_o)u$ is uniformly Lipschitz in
  $(t,u)$, $P(t,t_o)u$ is also uniformly Lipschitz in $(t,u)$ with the
  same constant as $F^\epsilon$.
\end{remark}

\begin{lemma}\label{lemma4.7}
  There exists a family of sets
  $\mathcal{D}_{t_o}\subseteq\mathcal{D}$, for $t_o\in I$ such that
  \begin{enumerate}[i)]
  \item $\mathcal{D}^3_{t_o} \subseteq \mathcal{D}_{t_o} \subseteq
    \mathcal{D}^2_{t_o}$ for any $t_o\in I$;
  \item The map $P$ defined in Corollary~\ref{cor:Pi} and restricted
    to the set $\mathcal{A} = \left\{(t,t_o,u) \colon \ t\geq 0,\
      t_o,t_o+t\in I,\ u\in\mathcal{D}_{t_o}\right\}$ is a global
    process according to Definition~\ref{def:Local} and Lipschitz in
    all its variables.
  \end{enumerate}
\end{lemma}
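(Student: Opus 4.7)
The plan is to build $\mathcal{D}_{t_o}$ as the largest subset of $\mathcal{D}^2_{t_o}$ whose $P$-orbit stays in the $\mathcal{D}^2$ family, then to derive the semigroup property by first proving it on a dense set of dyadic times—where two Euler polygonals concatenate exactly—and extending by continuity. Concretely, I would set
\begin{displaymath}
\mathcal{D}_{t_o} = \left\{ u \in \mathcal{D}^2_{t_o} \colon P(s,t_o) u \in \mathcal{D}^2_{t_o+s} \text{ for every } s \in [0, T-t_o] \right\}.
\end{displaymath}
The inclusion $\mathcal{D}_{t_o} \subseteq \mathcal{D}^2_{t_o}$ is immediate. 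For $\mathcal{D}^3_{t_o} \subseteq \mathcal{D}_{t_o}$, pick $u \in \mathcal{D}^3_{t_o}$: the definition~\Ref{def:D3} gives $F^{\epsilon_1}(s,t_o) u \in \mathcal{D}^2_{t_o+s}$ for all admissible $\epsilon_1, s$; since $\mathcal{D}$ is closed and $F$ continuous, $\mathcal{D}^2_{t_o+s}$ is an intersection of preimages of $\mathcal{D}$ under continuous maps and hence closed in $X$, so passing to the limit $\epsilon_1 \to 0^+$ with Proposition~\ref{prop:45} yields $P(s,t_o) u \in \mathcal{D}^2_{t_o+s}$, i.e.~$u \in \mathcal{D}_{t_o}$.

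The core step is the semigroup identity
\begin{displaymath}
P(t_2, t_o + t_1) \circ P(t_1, t_o) u = P(t_1 + t_2, t_o) u, \quad u \in \mathcal{D}_{t_o}.
\end{displaymath}
I would first restrict to $t_1, t_2$ of the form $k \, T 2^{-m}$ with $k,m \in \naturali$. For $\epsilon = T 2^{-n}$ with $n \geq m$ large, both $t_1/\epsilon$ and $t_2/\epsilon$ are integers, so no fractional remainder appears in~\Ref{eq:polygonal} and
\begin{displaymath}
F^\epsilon(t_2, t_o + t_1) \circ F^\epsilon(t_1, t_o) u = F^\epsilon(t_1 + t_2, t_o) u.
\end{displaymath}
Proposition~\ref{prop:45} sends the right-hand side to $P(t_1+t_2, t_o) u$ as $n \to \infty$. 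For the left-hand side, the uniform stability~\Ref{eq:Stability} gives $d\bigl(F^\epsilon(t_2, t_o+t_1) F^\epsilon(t_1,t_o) u,\, F^\epsilon(t_2, t_o+t_1) P(t_1,t_o) u\bigr) \leq L \cdot d(F^\epsilon(t_1,t_o)u, P(t_1,t_o)u) \to 0$; a second application of Proposition~\ref{prop:45}—legitimate because $P(t_1,t_o) u \in \mathcal{D}^2_{t_o+t_1}$ by the defining property of $\mathcal{D}_{t_o}$—sends $F^\epsilon(t_2, t_o+t_1) P(t_1,t_o) u$ to $P(t_2, t_o+t_1) P(t_1,t_o) u$. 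Density of dyadic rationals plus continuity of $P$ in $t$ (Remark~\ref{remark4.6}) then extends the identity to all admissible $t_1, t_2$.

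Once the identity is available, invariance $P(t_1, t_o) u \in \mathcal{D}_{t_o+t_1}$ is automatic: $P(s, t_o+t_1) P(t_1, t_o) u = P(s+t_1, t_o) u \in \mathcal{D}^2_{t_o+t_1+s}$ by definition of $\mathcal{D}_{t_o}$. Lipschitz dependence on $(t,u)$ is inherited from $F^\epsilon$ via Remark~\ref{remark4.6}; Lipschitz in $t_o$ follows from the semigroup identity, since for $u \in \mathcal{D}_{t_o} \cap \mathcal{D}_{t_o+s}$,
\begin{displaymath}
d\bigl(P(t, t_o + s) u,\, P(t, t_o) u\bigr) \leq L\, d\bigl(u,\, P(s, t_o) u\bigr) + d\bigl(P(t+s, t_o) u,\, P(t, t_o) u\bigr) \leq (L+1)\,\Lip(F)\, s,
\end{displaymath}
after inserting $P(t, t_o+s) P(s, t_o) u = P(t+s, t_o) u$ in the triangle inequality and using $d(u, P(s,t_o)u) \leq \Lip(F)\,s$. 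The principal obstacle is the semigroup identity at non-dyadic times: the two Euler polygonals $F^\epsilon(t_2,t_o+t_1) F^\epsilon(t_1,t_o)u$ and $F^\epsilon(t_1+t_2,t_o) u$ no longer coincide because their fractional steps fall at different interior times, which is why the proof must be routed through a dyadic subsequence and extended by continuity—and why the $P$-invariance built into $\mathcal{D}_{t_o}$ is essential, to let Proposition~\ref{prop:45} be applied to the intermediate point $P(t_1,t_o)u$.
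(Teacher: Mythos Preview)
Your proof is correct and follows essentially the same route as the paper: the identical definition of $\mathcal{D}_{t_o}$, the same closure argument for $\mathcal{D}^3_{t_o}\subseteq\mathcal{D}_{t_o}$, the same exact-concatenation trick on Euler polygonals for the semigroup identity, and the same derivation of invariance and of Lipschitz dependence on $t_o$ from that identity. The only noteworthy difference is that the paper fixes $t_1$ \emph{arbitrary} and lets $t_2$ range over times with $t_2/t_1\in\mathbb{Q}$, so the density extension needs only continuity of $P$ in its first argument; your dyadic scheme varies both $t_1$ and $t_2$, hence extending the left side $P(t_2,t_o+t_1)\,P(t_1,t_o)u$ also requires continuity of $P$ in its \emph{second} argument---this is available from Corollary~\ref{cor:Pi} (joint continuity as a uniform limit of continuous maps) rather than from Remark~\ref{remark4.6}, so just adjust that citation.
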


\begin{proof}
  Define
  \begin{displaymath}
    \mathcal{D}_{t_o}
    =
    \left\{u\in\mathcal{D}^2_{t_o}:\
      P(t,t_o)u\in\mathcal{D}^2_{t_o+t}\ 
      \mbox{ for any } t\geq 0
      \mbox{ such that }t_o+t\in I\right\}.
  \end{displaymath}
  Obviously, $\mathcal{D}_{t_o}\subseteq\mathcal{D}^2_{t_o}$. Take now
  $u\in\mathcal{D}^3_{t_o}$, we want to show that $u$ also belongs to
  $\mathcal{D}_{t_o}$. By~\Ref{def:D3}
  \begin{displaymath}
    F^{\epsilon_2} (t_2,t_o+t+t_1)\circ F^{\epsilon_1} 
    (t_1,t_o+t)\circ F^{\epsilon} 
    (t,t_o)u \in \mathcal{D}
  \end{displaymath}
  for any $\epsilon,\epsilon_1,\epsilon_2 \in \left]0, \delta \right]$
  and $t,t_1,t_2\geq 0$ such that $t_o+t+t_1+t_2\in I$. Since $u\in
  \mathcal{D}^2_{t_o}$, $\mathcal{D}$ is close and $F^{\epsilon_1}$,
  $F^{\epsilon_2}$ are Lipschitz, we let $\epsilon \to 0$ and obtain
  \begin{displaymath}
    F^{\epsilon_2} (t_2,t_o+t+t_1)\circ F^{\epsilon_1} 
    (t_1,t_o+t)\circ P 
    (t,t_o)u\in\mathcal{D}
  \end{displaymath}
  for any $\epsilon_1,\epsilon_2\in \left]0, \delta \right]$ and
  $t_1,t_2\geq 0$ such that $t_o+t+t_2+t_3\in I$, that is
  $P(t,t_o)u\in \mathcal{D}^2_{t_o}$ for all $t\geq 0$ such that
  $t_o+t\in I$ and hence $u\in\mathcal{D}_{t_o}$.  For $t_o\in I$ and
  $u\in\mathcal{D}^2_{t_o}\supset\mathcal{D}_{t_o}$ one trivially has
  $P(0,t_o)u=u$.  We are left to prove the ``semigroup
  properties''~\Ref{eq:process0} and~\Ref{eq:process}. We first
  show~\Ref{eq:process} for any $u \in \mathcal{D}_{t_o}$. If $t_1$ or
  $t_2$ vanishes, property~\Ref{eq:process} is trivial. Fix $t_1>0$
  and take $t_2>0$ such that $t_o+t_1+t_2\in I$ and
  $\frac{t_2}{t_1}\in\mathbb{Q}$ so that there exist two integers
  $h,k\in\naturali\setminus \{0\}$ which satisfy
  $\frac{t_1}{k}=\frac{t_2}{h}$. For any $\nu\in\naturali\setminus
  \{0\}$ define
  $\epsilon_\nu=\frac{1}{\nu}\frac{t_1}{k}=\frac{1}{\nu}\frac{t_2}{h}$. For
  $\epsilon_\nu < \delta$, by Definition~\ref{eq:polygonal} we have
  \begin{equation}
    \label{eq:nulimit}
    F^{\epsilon_\nu}(t_2,t_o+t_1)\circ F^{\epsilon_\nu}(t_1,t_o)u
    =
    F^{\epsilon_\nu}(t_2+t_1,t_o)u
  \end{equation}
  Since $u\in\mathcal{D}_{t_o}\subseteq\mathcal{D}^2_{t_o}$ one has
  that $F^{\epsilon_\nu}(t_2+t_1,t_o)u$ converges to $P(t_2+t_1,t_o)u$
  as $\nu\to +\infty$ (see Proposition~\ref{prop:45}).  Moreover the
  definition of $\mathcal{D}_{t_o}$ implies that
  $P(t_1,t_o)u\in\mathcal{D}^2_{t_o+t_1}$ and therefore
  $F^{\epsilon_\nu}(t_2,t_o+t_1)\circ P(t_1,t_o)u$ converges to
  $P(t_2,t_o+t_1)\circ P(t_1,t_o)u$. We can so compute
  \begin{eqnarray*}
    & &
    d\left(
      F^{\epsilon_\nu}(t_2,t_o+t_1) \circ F^{\epsilon_\nu}(t_1,t_o)u,
      P(t_2,t_o+t_1) \circ P(t_1,t_o)u
    \right)
    \leq
    \\
    & \leq &
    d\left(
      F^{\epsilon_\nu}(t_2,t_o+t_1)\circ F^{\epsilon_\nu}(t_1,t_o)u,
      F^{\epsilon_\nu}(t_2,t_o+t_1)\circ P(t_1,t_o)u
    \right)
    \\
    & &
    +
    d\left(
      F^{\epsilon_\nu}(t_2,t_o+t_1)\circ P(t_1,t_o)u,
      P(t_2,t_o+t_1)\circ P(t_1,t_o)u
    \right)
    \\
    & \leq &
    L d \left(F^{\epsilon_\nu}(t_1,t_o)u,P(t_1,t_o)u\right)
    \\
    & &
    +
    d\left(F^{\epsilon_\nu}(t_2,t_o+t_1)\circ P(t_1,t_o)u,
      P(t_2,t_o+t_1)\circ P(t_1,t_o)u\right)
    \\
    & &
    \to 0 \ \ \mbox{ as }\ \ \nu \to +\infty\,.
  \end{eqnarray*}
  Hence taking the limit as $\nu\to+\infty$ in~\Ref{eq:nulimit} we
  obtain~\Ref{eq:process} for any $t_2$ with
  $\frac{t_2}{t_1}\in\mathbb{Q}$. The continuity of $P$ concludes the
  proof of~\Ref{eq:process}. Now, if $u \in \mathcal{D}_{t_o}$ then,
  by definition, $P(t_1,t_o)u \in
  \mathcal{D}^2_{t_o+t_1}$. But~\Ref{eq:process} implies also that
  \begin{displaymath}
    P(t,t_o+t_1)\circ P(t_1,t_o)u=P(t+t_1,t_o)u\in\mathcal{D}^2_{t_o+t+t_1}
  \end{displaymath}
  for any $t\geq 0$ such that $t_o+t+t_1\in I$, therefore
  $P(t_1,t_o)u\in \mathcal{D}_{t_o+t_1}$
  proving~\Ref{eq:process0}. Finally, the Lipschitz continuity with
  respect to $t$ and $u$ follows from Remark~\ref{remark4.6}, while
  the Lipschitz continuity with respect to $t_o$ is a direct
  consequence of the semigroup property. Indeed, take $0\leq t_1\leq
  t_2\leq T$, $u\in\mathcal{D}_{t_1} \cap \mathcal{D}_{t_2}$ and use
  the Lipschitz continuity with respect to $(t,u)$:
  \begin{eqnarray*}
    d\left(P\left(t,t_1\right)u,P\left(t,t_2\right)u\right)
    & = &
    d \left(
      P\left(t,t_2\right)\circ P\left(t_2-t_1,t_1\right)u,
      P\left(t,t_2\right)u
    \right)
    \\
    & \leq & 
    L \cdot d\left(P\left(t_2-t_1,t_1\right)u,u\right)
    \\
    & \leq & 
    L\cdot\Lip(F) \cdot (t_2-t_1) \,.
  \end{eqnarray*}
\end{proof}

The following Lemma concludes the proof of Theorem~\ref{thm:main}.

\begin{lemma}
  The map $P:\mathcal{A}\to X$ defined in Corollary~\ref{cor:Pi}
  restricted to the set $\mathcal{A}$ defined in Lemma~\ref{lemma4.7}
  satisfies~\ref{fgsfssald3}) in Theorem~\ref{thm:main}.
\end{lemma}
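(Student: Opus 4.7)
The plan is to recognize that $F(t,t_o)u$ itself can be rewritten as a particular Euler $\epsilon$-polygonal and then apply the already-proven telescoping estimate, Lemma~\ref{lem:integral}. Indeed, taking $\epsilon = t$ in the definition~\Ref{eq:polygonal} gives $[t/\epsilon]=1$, so
\[
F^t(t,t_o)u = F(t-t,\,t_o+t)\circ F(t,t_o)u = F(t,t_o)u.
\]
Thus $F(t,t_o)u$ is precisely $F^{t\cdot 2^{-m}}(t,t_o)u$ in the case $m=0$.

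Fix $u \in \mathcal{D}_{t_o}$ and $t \in \left]0,\delta\right]$ with $t_o+t\in I$. By Lemma~\ref{lemma4.7}-i) one has $u \in \mathcal{D}^2_{t_o}$, so the hypotheses of Lemma~\ref{lem:integral} are met with $m=0$ and arbitrary $n \in \naturali$. Applying that lemma yields
\[
d\left(F(t,t_o)u,\; F^{t 2^{-n}}(t,t_o)u\right)
\leq \frac{2L}{\ln 2}\, t\, \int_{t 2^{-n}}^{t} \frac{\omega(\xi)}{\xi}\, d\xi.
\]
I would then pass to the limit $n \to +\infty$. On the left, Corollary~\ref{cor:Pi} gives $F^{t 2^{-n}}(t,t_o)u \to P(t,t_o)u$; on the right, the integrability hypothesis on $\omega(\xi)/\xi$ stated in~\ref{it:first}.~of Theorem~\ref{thm:main} lets me replace the lower limit by $0$ (monotone convergence). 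Dividing by $t$ gives~\Ref{eq:tangent}.

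There is no real obstacle: the delicate part is entirely absorbed into Lemmas~\ref{lem:kappa}, \ref{lem:hk}, \ref{lem:integral} and Corollary~\ref{cor:Pi}; the present argument is only a specialization of the diadic comparison to $m=0$, exploiting the observation that $F=F^t$ at time $t$. The only book-keeping points are checking that all intermediate polygonals are well defined (guaranteed by $u\in\mathcal{D}^2_{t_o}$ through~\Ref{def:D2}) and that the range $t\in\left]0,\delta\right]$ assumed in~\ref{fgsfssald3}) of Theorem~\ref{thm:main} matches the restriction $t\cdot 2^{-m}\in\left]0,\delta\right]$ of Lemma~\ref{lem:integral} for $m=0$.
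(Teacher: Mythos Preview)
Your proof is correct and follows exactly the same route as the paper: observe that $F^t(t,t_o)u=F(t,t_o)u$ for $t\in\left]0,\delta\right]$, then apply Lemma~\ref{lem:integral} with $m=0$ and let $n\to+\infty$ using Corollary~\ref{cor:Pi}. The only difference is that you spell out the verification of $F^t(t,t_o)u=F(t,t_o)u$ and the domain bookkeeping more explicitly than the paper does.
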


\begin{proof}
  When $t\in \left]0,\delta \right]$ one has
  $F^t(t,t_o)u=F(t,t_o)u$. Hence the Lemma is proved taking $t\in
  \left]0, \delta \right]$, $m=0$ and $n\to+\infty$ in
  Lemma~\ref{lem:integral}.
\end{proof}

\begin{proofof}{Proposition~\ref{prop:Euler}}
  Use the Lipschitz continuity of $P$ and~\Ref{eq:tangent}:
  \begin{eqnarray*}
    & &
    d \left( F^E (t,t_o) u, P(t,t_o) u \right)
    \leq
    \\
    & \leq &
    \sum_{k=1}^N
    d\left(
      \begin{array}{c}
        P(t-\tau_k,t_o+\tau_k) \, F^E (\tau_k,t_o)u,
        \\
        P(t-\tau_{k-1},t_o+\tau_{k-1}) \, F^E (\tau_{k-1},t_o)u
      \end{array}
    \right)
    \\
    & \leq &
    L \cdot
    \sum_{k=1}^N
    d\left(
      \begin{array}{c}
        F (\tau_k-\tau_{k-1},t_o+\tau_{k-1}) \,
        F^E (\tau_{k-1},t_o)u,
        \\
        P(\tau_k-\tau_{k-1},t_o+\tau_{k-1}) \,
        F^E (\tau_{k-1},t_o)u
      \end{array}
    \right)
    \\
    & \leq &
    \frac{2L^2}{\ln 2}\cdot
    \sum_{k=1}^N
    (\tau_k-\tau_{k-1}) 
    \int_0^\Delta \frac{\omega(\xi)}{\xi} \, d\xi
    \\
    & \leq &
    \frac{2L^2}{\ln 2} \, t \,
    \int_0^\Delta \frac{\omega(\xi)}{\xi} \, d\xi
  \end{eqnarray*}
\end{proofof}

\begin{proofof}{Proposition~\ref{prop:no}}
  Since we assume that~\textbf{(D)} is satisfied, we have no problem
  with the domains, i.e.
  \begin{equation}
    \label{eq:46}
    F(t,t_o) \mathcal{D}_{t_o} \subseteq \mathcal{D}_{t_o+t} \,,
    \qquad\mbox{ for } t \in [0,\delta] \,.
  \end{equation}
  By Remark~\ref{rem:basta2},~$F^{t2^{-m}}\left(t,t_o\right)u$
  converges uniformly to a map $P\left(t,t_o\right)u$ which is
  continuous in $(t,t_o)$ and uniformly Lipschitz in $u$. With $m=0$
  $n\to\infty$ in Lemma~\ref{lem:integral} we get immediately the
  tangency condition. Hence, because of~\Ref{eq:46} $P$ has all the
  properties of the process in Theorem~\ref{thm:main} except the
  Lipschitz continuity with respect to $(t,t_o)$ and the ``semigroup''
  property~\Ref{eq:process}. Finally, since
  \begin{displaymath}
    F^{2^{-n}t}\left(t,t_o+t\right)\circ F^{2^{-n}t}\left(t,t_o\right)u
    =
    F^{2^{-n}t}\left(2t,t_o\right)=
    F^{2^{-(n+1)}\cdot 2t}\left(2t,t_o\right)
  \end{displaymath}
  as $n\to +\infty$ we get~\Ref{it:no:2}.
\end{proofof}

\begin{lemma}
  \label{lem:convex}
  If $C$ is a closed and convex subset of the Hilbert space $X$ that
  satisfies~\textbf{(C)}, then for all $u \in C$, $v \in X$ and
  $\tau,\tau' \geq 0$,
  \begin{equation}
    \label{eq:StopCondition}
    \norma{
      \Pi \left(\Pi(u+\tau v) + \tau'v\right) - 
      \Pi\left(u+(\tau+\tau')v\right)
    }
    \leq 
    K \, \norma{v} \, \tau \, \tau' \,.
  \end{equation}
\end{lemma}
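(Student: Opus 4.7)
The plan is to bound
\[
\varphi(s,t) := \Pi\bigl(\Pi(u+sv)+tv\bigr)-\Pi\bigl(u+(s+t)v\bigr)
\]
for $s,t\ge 0$, exploiting that $\varphi$ vanishes on both coordinate axes: $\varphi(0,t)=0$ because $\Pi(u)=u$ for $u\in C$, and $\varphi(s,0)=0$ because $\Pi\circ\Pi=\Pi$. The factor $\tau\tau'$ will come from a line-integral representation. Setting $a(s):=\Pi(u+sv)-(u+sv)$, so that $\Pi(u+sv)+tv=(u+sv)+tv+a(s)$ and $\norma{a(s)}=\mathrm{dist}(u+sv,C)\le s\norma{v}$ (since $u\in C$), I rewrite
\[
\varphi(s,t) = \int_0^1 D\Pi\bigl((u+sv)+tv+r\,a(s)\bigr)\cdot a(s)\,dr.
\]

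The crucial geometric fact is that at $t=0$ the evaluation point $(u+sv)+r\,a(s)=\Pi(u+sv)+(1-r)\,\mathrm{dist}(u+sv,C)\,\nu$ lies on the outward normal ray through $\Pi(u+sv)\in\partial C$, where $\nu$ is the outward unit normal; convexity of $C$ forces $\Pi$ to be constant along this ray, hence $D\Pi\cdot\nu=0$ there, and because $a(s)$ is proportional to $\nu$ the integrand vanishes identically at $t=0$, consistently with $\varphi(s,0)=0$. For $t>0$ the evaluation point is shifted off the normal ray by $tv$, and the Lipschitzeanity of $D\Pi$ granted by the bound on $\norma{D^2\Pi}$ in~\textbf{(C)} yields
\[
\norma{D\Pi\bigl((u+sv)+tv+r\,a(s)\bigr)\cdot a(s)} \le \sup\norma{D^2\Pi}\cdot t\,\norma{v}\cdot\norma{a(s)} \le \sup\norma{D^2\Pi}\cdot s\,t\,\norma{v}^2.
\]
Integrating in $r$ and specialising to $(s,t)=(\tau,\tau')$ gives $\norma{\varphi(\tau,\tau')}\le\sup\norma{D^2\Pi}\cdot\tau\tau'\norma{v}^2$, which is~\Ref{eq:StopCondition} once one factor of $\norma{v}$ is absorbed into $K$ (the stop-problem application has $\norma{v}\le\sup\norma{f}$).

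The main obstacle is the regularity of $\Pi$ at $\partial C$, where the identity on $C$ and the $C^2$ extension from the collar have mismatched first derivatives. Convexity of $C$ makes $\{s\ge 0\colon u+sv\in C\}$ an initial interval $[0,s^*]$ on which $\Pi(u+sv)=u+sv$ and $\varphi$ vanishes identically. Replacing $u$ by $u+s^*v\in\partial C$ and shrinking $\tau$ by $s^*$ (the bound $\tau\tau'$ only improves), one reduces to $u\in\partial C$ and $u+sv\notin C$ for all $s\in(0,\tau+\tau']$. A direct convexity argument, using $sv\cdot\nu(\Pi(u+sv))\ge\mathrm{dist}(u+sv,C)>0$ to conclude that $v$ points strictly outward at every $\Pi(u+sv)$, then shows that the integration segment in the line integral above stays in the outward half-space at $\Pi(u+sv)$ and hence in $\overline{B(C,d)}\setminus C$, where $\Pi$ is $C^2$, so the estimate applies rigorously.
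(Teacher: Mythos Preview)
Your proof is correct and follows the same overall plan as the paper's: reduce to the situation where the base point sits on $\partial C$ and the ray $s\mapsto u+sv$ lies outside $C$, then exploit that $\varphi$ vanishes on both coordinate axes together with the uniform bound on $D^2\Pi$ from~\textbf{(C)} to extract the factor $\tau\tau'$. The paper's proof is considerably terser: after the same boundary reduction it simply observes that the $X$-valued map $(\tau',\tau'')\mapsto\varphi$ is $\C2$ on $[0,\delta]^2$ and vanishes on both axes, and invokes this abstractly to get $\norma{\varphi}\le K\tau'\tau''$ (essentially the double-integral identity $\varphi=\int_0^{\tau'}\!\!\int_0^{\tau''}\partial_1\partial_2\varphi$). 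Your version instead writes $\varphi$ as a \emph{single} line integral of $D\Pi$ along the segment joining $(u+(s+t)v)$ to $\Pi(u+sv)+tv$, identifies geometrically why the integrand vanishes at $t=0$ (namely $D\Pi\cdot\nu=0$ along the normal fibre, since $\Pi$ is constant there), and then gets the factor $t$ from the Lipschitz bound on $D\Pi$. This is more explicit and makes the role of convexity (forcing the integration segment to stay in the outer half-space, hence in $\overline{B(C,d)}\setminus C$) transparent; the paper leaves that verification to the reader.

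Two small remarks. First, both arguments tacitly need the relevant points to remain in the collar $\overline{B(C,d)}$, i.e.\ $(\tau+\tau')\norma{v}\le d$; the lemma is stated for all $\tau,\tau'\ge 0$ but is only used in Proposition~\ref{prop:stop} with $\tau,\tau'\le\delta=d/(2\sup\norma{f})$, where this holds. Second, you correctly note that your computation (and in fact the paper's rescaling) yields $K\norma{v}^2\tau\tau'$ rather than the $K\norma{v}\tau\tau'$ of~\Ref{eq:StopCondition}; in the application one factor of $\norma{v}\le\sup\norma{f}$ is indeed absorbed into the constant, so this is harmless.
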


\begin{proof}
  Assume first that $\norma{v}=1$.

  If $u+\tau v \in C$, the right hand side above vanishes and the
  inequality trivially holds.

  If $u+\tau v \not\in C$, then there exists a unique $\tau'' \in
  \left] 0, \tau \right[$ such that $u + (\tau-\tau'') v \in \partial
  C$. Moreoever, setting $u'' = u + (\tau-\tau'') v$,
  \begin{eqnarray*}
    \Pi(u+\tau v) 
    & = & 
    \Pi\left( u'' + \tau'' v \right)
    \\
    \Pi\left( u + (\tau+\tau') v \right)
    & = &
    \Pi\left( u'' + ( \tau'' + \tau') v \right)
  \end{eqnarray*}
  By the convexity of $C$, both $u'' + \tau'' v$ and $u'' + ( \tau'' +
  \tau') v$ are in $X \setminus C$. Hence, by~\textbf{(C)}, the
  map
  \begin{displaymath}
    (\tau',\tau'') 
    \mapsto 
    \norma{
      \Pi \left( \Pi\left( u'' + \tau'' v \right) + \tau' v \right)
      -
      \Pi\left( u'' + ( \tau'' + \tau') v \right)
    }
  \end{displaymath}
  is $\C2$ for $\tau',\tau'' \in [0,\delta]$. It vanishes both for
  $\tau'=0$ and for $\tau''=0$, so there exists a positive $K$ such
  that
  \begin{eqnarray*}
    & &
    \norma{
      \Pi \left(\Pi(u+\tau v) + \tau'v\right) - 
      \Pi\left(u+(\tau+\tau')v\right)
    }
    =
    \\
    & = &
    \norma{
      \Pi \left( \Pi\left( u'' + \tau'' v \right) + \tau' v \right)
      -
      \Pi\left( u'' + ( \tau'' + \tau') v \right)
    }
    \\
    & \leq &
    K \, \tau' \, \tau''
    \\
    & \leq &
    K \, \tau \, \tau' \,.
  \end{eqnarray*}
  The case $\norma{v} \neq 1$ follows by a straightforward
  rescaling procedure.
\end{proof}

\noindent\textbf{Acknowledgment.} The authors thank Fabio Zucca for
useful hints in the construction of the function $f$ in
Paragraph~\ref{sub:example}.

\def\cprime{$'$}

\end{document}